\newtheorem{thm}{Theorem}[section]
\newtheorem{cor}[thm]{Corollary}
\newtheorem{lem}[thm]{Lemma}
\theoremstyle{definition}
\theoremstyle{remark}
\newtheorem{qu}[thm]{Question}
\begin{document}

\title{The Automorphism Group of the Reduced Complete-Empty $X-$Join of Graphs}

\author{ Adel Tadayyonfar$^\star$ and Ali Reza Ashrafi$^\sharp$}

\thanks{$^\star$Corresponding author (Email: adeltadayyonfar@iauln.ac.ir).\\
$^\sharp$ (Email: ashrafi@kashanu.ac.ir).}

\address{{\bf Adel Tadayyonfar}, Lenjan Branch, Islamic Azad University, Sedeh$-$Lenjan, Zarin $-$Shahr, 84741$-$68333, Isfahan, I. R. Iran}
\address{{\bf Ali Reza Ashrafi}, Department of Pure Mathematics, Faculty of Mathematical Sciences, University of Kashan, Kashan 87317$-$53153, I. R. Iran}

\dedicatory{}

\begin{abstract}
Suppose $X$ is a simple graph. The $X-$join $\Gamma$ of a set of
complete or empty graphs $\{X_x \}_{x \in V(X)}$ is a simple graph with the following vertex and edge sets:
\begin{eqnarray*}
V(\Gamma) &=& \{(x,y) \ | \ x \in V(X) \ \& \ y \in
V(X_x) \},\\ E(\Gamma) &=& \{
(x,y)(x^\prime,y^\prime) \ | \ xx^\prime \in E(X) \ or \ else \
x = x^\prime \ \& \ yy^\prime \in E(X_x)\}.
\end{eqnarray*}
The $X-$join
graph $\Gamma$ is called reduced if for vertices $x, y \in
V(X)$, $x \ne y$, $N_X(x) \setminus \{ y\} = N_X(y) \setminus \{ x\}$ implies
that $(i)$ if $xy \not\in E(X)$ then the graphs $X_x$
or $X_y$ are non-empty; $(ii)$ if $xy \in E(X)$ then
$X_x$ or $X_y$ are not complete graphs.

In this paper, we want to explore how the graph theoretical properties of  $X-$join of graphs effect on its automorphism group.
Among other results we compute the automorphism group of reduced complete-empty $X-$join of graphs.

\vskip 3mm

\noindent{\bf Keywords:} $X-$join of graphs, reduced $X-$join of graphs, automorphism group.

\vskip 3mm

\noindent{\it 2010 AMS Subject Classification Number:} Primary 20B25; Secondary 05C50.
\end{abstract}

\maketitle

%%%%%%%%%%%%%%%%%%%%%%%%%%%%%%%%%%%%%%%%%%%%%%%%%%%%%%%%%%%%%%%%%%%%%%%%%%%%%%%%%%%%%%%%%%%%%%%%%%%%%%%%%%%%%%%%%%%%%%%%%

\section{Introduction}

Throughout this paper all graphs are assumed to be simple and undirected. Our notations are standard and taken mainly from \cite{10,11}.
Suppose $X$ is such a graph. Sabidussi
\cite[p. 396]{5},
has defined the $X-$join of a set of graphs $\{X_x \}_{x \in V(X)}$
as the graph $\Gamma$ with vertex and edge sets
\begin{eqnarray*}
V(\Gamma) &=& \{(x,y) \ | \ x \in V(X) \ \& \ y \in V(X_x) \},\\
E(\Gamma) &=& \{ (x,y)(x^\prime,y^\prime) \ | \ xx^\prime \in E(X) \ or \  else \  x  =  x^\prime \ \& \  yy^\prime \in E(X_x)\}.
\end{eqnarray*}
This graph is obtained by replacing each vertex $x \in V(X)$
by the graph $X_x$ and inserting either all or none
of the possible edges between vertices of $X_x$ and
$X_y$ depending on whether or not $x$ and $y$ are
joined by an edge in $X$. In this paper, $X$ is assumed to be connected and
the $X-$join of complete or empty graphs $X_x$, $x \in V(X)$,
is denoted by $(\biguplus_{x \in V(X)} X_x)_X$. It is clear that when $X = K_2$, the $X-$join of
graphs $X_1$ and $X_2$ is the ordinary join and if $X =
P_n$, $n \geq 2$, then the $X-$join of graphs $X_1, \cdots,
X_{n+1}$ is the sequential join of these graphs.

Suppose $\Delta$ is a graph and $N_\Delta(x)$ denotes the set of
all neighbors of $x$ in $\Delta$. Following Habib and Maurer
\cite{55}, a subset $A$ of $V(\Delta)$ is externally
related in $\Delta$, if $N_\Delta(x) \setminus A = N_\Delta(y) \setminus A$,
for all $x, y \in A$. Obviously, $\emptyset$, $\{ x\}$, $x \in
V(\Delta)$, and $V(\Delta)$ are externally related. The complete-empty
$X-$join graph $\Gamma = (\biguplus_{x \in V(X)}X_x)_X$ is called
reduced if for vertices $x, y \in V(X)$, $x \ne y$, $N_X(x) \setminus
\{ y\} = N_X(y) \setminus \{ x\}$ implies that $(i)$ if $xy \not\in
E(X)$ then at least one of $X_x$ and $X_y$ is not an
empty graph; $(ii)$ if $xy \in E(X)$ then at least one of
$X_x$ and $X_y$ is not a complete graph.

Suppose $\Gamma_1$ and $\Gamma_2$ are graphs with disjoint vertex sets. The lexicographic product of $\Gamma_1$ and $\Gamma_2$ is another graph $\Gamma_1 o \Gamma_2$ with vertex set $V(\Gamma_1) \times V(\Gamma_2)$ and two vertices $(x_1,y_1)$ and $(x_2,y_2)$ are adjacent if and only if $x_1x_2 \in E(\Gamma_1)$ or $x_1 = x_2$ and $y_1y_2 \in E(\Gamma)$. Note that the lexicographic product is not commutative. If $\Gamma$ is the $X-$join of  graphs $\{X_x \}_{x \in V(X)}$ and $X_x \cong X_y$, for each $x, y \in V(X)$, then $\Gamma \cong X o X_x$, for some $x \in V(X)$.

\begin{lem}
Suppose $x , y$ are vertices of a simple graph $\Delta$. Then
$N_{\Delta}(x) \setminus \{y\} = N_{\Delta}(y) \setminus \{x\}$
if and only if $\{x , y\}$ is externally related.
\end{lem}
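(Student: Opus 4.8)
The plan is to prove this by directly unpacking the definition of ``externally related'' in the special case of a two-element set and then exploiting the hypothesis that $\Delta$ is simple (in particular loopless). Recall that, by definition, $A \subseteq V(\Delta)$ is externally related precisely when $N_\Delta(u) \setminus A = N_\Delta(v) \setminus A$ for all $u, v \in A$. Applying this to $A = \{x, y\}$, the only instances of the pair $(u,v)$ that impose a genuine constraint are $(x,y)$ and $(y,x)$, since taking $u = v$ yields a tautology. Thus $\{x,y\}$ is externally related if and only if the single equation
\[
N_\Delta(x) \setminus \{x, y\} = N_\Delta(y) \setminus \{x, y\}
\]
holds, and the whole lemma reduces to showing that this equation is equivalent to $N_\Delta(x) \setminus \{y\} = N_\Delta(y) \setminus \{x\}$.

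The key observation that closes this gap is that $\Delta$ has no loops: a vertex is never its own neighbor, so $x \notin N_\Delta(x)$ and $y \notin N_\Delta(y)$. Consequently, deleting $x$ from $N_\Delta(x)$ changes nothing, giving $N_\Delta(x) \setminus \{x, y\} = N_\Delta(x) \setminus \{y\}$; symmetrically, $N_\Delta(y) \setminus \{x, y\} = N_\Delta(y) \setminus \{x\}$. Substituting these two identities into the displayed external-relation condition turns it termwise into $N_\Delta(x) \setminus \{y\} = N_\Delta(y) \setminus \{x\}$, which proves both implications simultaneously.

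I do not anticipate a substantive obstacle here, since the statement is essentially a restatement of the definition; the only point requiring care is the bookkeeping of whether a vertex belongs to its own neighborhood, and this is exactly where the simplicity hypothesis is used. I would present the argument as a short chain of set equalities rather than splitting it into separate ``if'' and ``only if'' paragraphs, so as to make transparent that the equivalence is an identity of sets obtained by removing the harmless elements $x$ and $y$ from the respective neighborhoods.
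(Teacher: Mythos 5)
Your proposal is correct and follows essentially the same route as the paper: both arguments reduce the external-relation condition for $A=\{x,y\}$ to the single equation $N_\Delta(x)\setminus\{x,y\}=N_\Delta(y)\setminus\{x,y\}$ and then use the absence of loops in a simple graph to identify $N_\Delta(x)\setminus\{x,y\}$ with $N_\Delta(x)\setminus\{y\}$ (and symmetrically for $y$). The only cosmetic difference is that the paper phrases this identification as a pair of inclusions while you state it directly as an equality of sets.
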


\begin{proof}
It is easy to see that
$N_{\Delta}(x) \setminus \{x , y\} \subseteq N_{\Delta}(x) \setminus \{y\}$
and $N_{\Delta}(y) \setminus \{x , y\} \subseteq N_{\Delta}(y) \setminus \{x\}$.
Since $\Delta$ is simple,
$N_\Delta(x) \setminus \{ y\} \subseteq N_\Delta(x) \setminus \{ x,y\}$ and
$N_\Delta(y) \setminus \{ x\} \subseteq N_\Delta(y) \setminus \{ x,y\}$ proving the lemma.
\end{proof}

The complete and empty graphs on a non-empty set $B$ are denoted
by $K_B$ and $\Phi_B$, respectively. In the case that $|B| = n$
we use the notation $K_n$ as $K_B$ and $\Phi_n$ as $\Phi_B$. The
complete bipartite graphs $K_{m,n}$ can be constructed as
$K_{m,n} = \Phi_m + \Phi_n$. If $\Sigma$ and $\Delta$ are graphs
with $V(\Sigma) \subseteq V(\Delta)$ and $E(\Sigma) \subseteq
E(\Delta)$, then we say $\Sigma$ is a subgraph of $\Delta$ and
write $\Sigma \leq \Delta$. If $T \subseteq V(\Delta)$ then the
induced subgraph $\Delta[T]$ is a subgraph with $V(\Delta[T]) =
T$ and $E(\Delta[T]) = \{ e=uv \in E(\Delta) \ | \ \{ u, v\}
\subseteq T\}$. If $C$ and $D$ are subsets of $V(\Delta)$ and all
elements of $C$ are adjacent to all elements of $D$, then we write
$C \sim D$. If there is no element in $C$ to be adjacent with an
element of $D$, then we use the notation $C \nsim D$. If $\Gamma$ is a graph and $\mathcal{P}$ is a partition of $V(\Gamma)$ then the quotient graph $\frac{\Gamma}{\mathcal{P}}$ has the vertex set $\mathcal{P}$ and two partitions $V_1$ and $V_2$ are adjacent if there are $v_1 \in V_1$ and $v_2 \in V_2$ such that $v_1v_2 \in E(\Gamma)$. Our other
notations are standard and can be taken from the standard book on
graph theory.

The aim of this paper is to compute the automorphism group of the reduced
complete-empty $X-$join graphs. To do this, we assume that
$\Gamma$ is such a graph.
Choose $X_x$, $x \in V(X)$, to be the subgraph corresponding to the vertex
$x$ in $X$. Define $x \thickapprox y$, if and only if
$X_x \cong X_y$, where $x, y \in V(X)$. It is easy to
see that $\thickapprox$ is an equivalence relation.
Moreover, we assume that $T_x$ denotes the equivalence class of
$x$ under $\thickapprox$ and $W$ is a set of representatives for
equivalence relation  $\thickapprox$. Define
$\mathcal{A}(X) = \{f \in Aut(X) \ | \ \forall x \in W , f(T_x) = T_x\}$
as a subgroup of $Aut(X)$. Our main result is:

\begin{thm}\label{main teorem}
Suppose $\Gamma$ is a reduced complete-empty $X-$join of graphs $X_x$, $x \in V(X)$. Then,
$$Aut(\Gamma) \cong \left(\prod_{x \in V(X)}Sym(V(X_x))\right) \rtimes \mathcal{A}(X).$$
\end{thm}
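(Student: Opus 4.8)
The plan is to show that the \emph{fibers} $V_x := \{(x,y) : y \in V(X_x)\}$, $x \in V(X)$, which partition $V(\Gamma)$, constitute a canonical, automorphism-invariant feature of $\Gamma$; once this is established, every automorphism is forced to permute the fibers, to induce an automorphism of $X$ lying in $\mathcal{A}(X)$, and the group will split as claimed. As a warm-up I would record that $N := \prod_{x \in V(X)} Sym(V(X_x))$ embeds into $Aut(\Gamma)$: since each $X_x$ is complete or empty, any permutation of $V(X_x)$ preserves the edges inside $V_x$, and since the edges between $V_x$ and $V_z$ are complete bipartite when $xz \in E(X)$ and absent otherwise, such a permutation also respects every edge leaving the fiber. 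The strategy is then to build a homomorphism $\Psi : Aut(\Gamma) \to \mathcal{A}(X)$ with kernel exactly $N$, to exhibit a splitting, and to conclude $Aut(\Gamma) = N \rtimes \mathcal{A}(X)$.

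The crux is the claim that the classes of the externally-related relation $u \equiv v \iff N_\Gamma(u)\setminus\{v\} = N_\Gamma(v)\setminus\{u\}$ (an equivalence relation, by Lemma~1.1 together with a short transitivity check) are precisely the fibers $V_x$; this is where the reduced hypothesis is indispensable, and I expect it to be the main obstacle. One inclusion is easy: for $(x,y),(x,y') \in V_x$ the neighbors outside $V_x$ coincide (both equal $\bigcup_{z\in N_X(x)} V_z$) and inside $V_x$ the two vertices are adjacent to the same set (everything else if $X_x$ is complete, nothing if $X_x$ is empty), so $(x,y)\equiv(x,y')$. For the converse, suppose $(x,y)\equiv(x',y')$ with $x\neq x'$. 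Comparing adjacencies to all $(z,w)$ with $z\notin\{x,x'\}$ forces $N_X(x)\setminus\{x'\} = N_X(x')\setminus\{x\}$, i.e.\ $\{x,x'\}$ is externally related in $X$. If $xx'\notin E(X)$, then $(x',y')$ is adjacent to no vertex of $V_x$, so matching neighborhoods forces every other vertex of $V_x$ to be non-adjacent to $(x,y)$, i.e.\ $X_x$ is empty (a one-vertex fiber being vacuously empty); symmetrically $X_{x'}$ is empty, contradicting reducedness~$(i)$. If $xx'\in E(X)$, then $(x',y')$ is adjacent to all of $V_x$, forcing every other vertex of $V_x$ to be adjacent to $(x,y)$, i.e.\ $X_x$ and likewise $X_{x'}$ are complete, contradicting reducedness~$(ii)$. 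Hence distinct fibers never meet a common $\equiv$-class, so the $\equiv$-classes are exactly the $V_x$.

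With this in hand the homomorphism is immediate. Since $\equiv$ is defined purely by adjacency, every $\sigma \in Aut(\Gamma)$ permutes its classes, hence permutes the fibers; write $\sigma(V_x) = V_{\bar\sigma(x)}$. Because $V_x$ and $V_{x'}$ are joined by an edge precisely when $xx'\in E(X)$, the induced map $\bar\sigma$ is an automorphism of the quotient graph $\frac{\Gamma}{\{V_x\}}$, which is $X$ itself. Moreover $\sigma$ restricts to an isomorphism $\Gamma[V_x] \to \Gamma[V_{\bar\sigma(x)}]$, so $X_x \cong X_{\bar\sigma(x)}$, that is $\bar\sigma(x) \in T_x$ for all $x$, whence $\bar\sigma \in \mathcal{A}(X)$. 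Setting $\Psi(\sigma) = \bar\sigma$ gives a homomorphism $Aut(\Gamma)\to \mathcal{A}(X)$ whose kernel is the set of automorphisms fixing each fiber setwise; by the warm-up this kernel is exactly $N$.

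Finally I would settle surjectivity and the splitting simultaneously. Fix once and for all bijections $\lambda_z : V(X_z) \to \{1,\dots,|V(X_z)|\}$. For $f\in\mathcal{A}(X)$ one has $|V(X_{f(z)})| = |V(X_z)|$ with matching type, so $s(f)(z,y) := \bigl(f(z),\, \lambda_{f(z)}^{-1}(\lambda_z(y))\bigr)$ defines an automorphism of $\Gamma$ with $\Psi(s(f)) = f$, and a one-line computation shows $s(f)\,s(g) = s(fg)$, so $s$ is a homomorphic section. Its image $H \cong \mathcal{A}(X)$ satisfies $H\cap N = 1$ and $HN = Aut(\Gamma)$ because $\Psi$ is onto with kernel $N$. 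Therefore $Aut(\Gamma) = N \rtimes H \cong \left(\prod_{x\in V(X)} Sym(V(X_x))\right) \rtimes \mathcal{A}(X)$, as required.
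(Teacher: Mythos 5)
Your proposal is correct, and it reaches the paper's structural conclusion --- the fiber-preserving subgroup $\prod_{x}Sym(V(X_x))$ as the kernel of a surjection onto $\mathcal{A}(X)$, split by a complement built from fixed identifications of isomorphic fibers --- but it obtains the key fact (that every automorphism permutes the fibers) by a genuinely different and cleaner route. The paper proves fiber-invariance dynamically: it takes $\theta \in Aut(\Gamma)$ and two vertices $a,b$ of one fiber and runs a long case analysis on where $\theta(a)$ and $\theta(b)$ can land (leaning on the case analyses of Theorems~\ref{p1} and~\ref{p2}), each branch ending in a violation of reducedness. You instead characterize the fibers statically, as the classes of the twin relation $u\equiv v \iff N_\Gamma(u)\setminus\{v\}=N_\Gamma(v)\setminus\{u\}$, so invariance under every automorphism is automatic; reducedness enters exactly once, to show that no $\equiv$-class meets two fibers. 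This buys a much shorter argument and yields, as a by-product, essentially the uniqueness statement of Theorem~\ref{unique} (the fiber partition is canonically determined by $\Gamma$); it is also close in spirit to the $CE_m$ construction of Section 3, which the paper only deploys later. Two small points to tidy: the transitivity of $\equiv$ does require the one-line verification that $u\equiv v$ and $v\equiv w$ force $uv\in E \Leftrightarrow uw\in E \Leftrightarrow vw\in E$ (apply each hypothesis to the third vertex), which you flag but should write out; and your section $s(f)$ is better phrased via chosen bijections $\mu_z : V(X_z)\to V(X_w)$ onto a fixed representative $w$ of $T_z$ rather than onto $\{1,\dots,|V(X_z)|\}$ if infinite fibers are allowed --- a cosmetic change, and the paper's own complement $\sigma_f((x,a))=(f(x),a)$ makes the same implicit identification.
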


%%%%%%%%%%%%%%%%%%%%%%%%%%%%%%%%%%%%%%%%%%%%%%%%%%%%%%%%%%%%%%%%%%%%%%%%%%%%%%%%%%%%%%%%%%%%%%%%%%%%%%%%%%%%%%%%%%%%%%%%%

\section{Proof of the Main Theorem}

The aim of this section is to prove the main theorem of this paper.

\begin{lem}\label{lemma of aut.}

Suppose $\Gamma$ is a reduced complete-empty
$X-$join and $X_x$ is a graph corresponding to the vertex $x$ of $X$.
If $\sigma \in Aut(\Gamma)$ satisfies
this condition that for each $x \in V(X)$, there exists $y \in V(X)$,
such that $\sigma(X_x) = X_y$ then the function $f: V(X) \longrightarrow V(X)$
given by $f(x) = y$ is an automorphism of $X$.

\end{lem}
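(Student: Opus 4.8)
The plan is to show that the permutation of blocks induced by $\sigma$ is precisely an automorphism of $X$. First I would verify that $f$ is well defined. The vertex sets of the subgraphs $X_x$, $x \in V(X)$, partition $V(\Gamma)$, and by hypothesis $\sigma$ carries each such block onto another block $X_y$; since distinct blocks are disjoint, the target $y$ is uniquely determined by $x$, so $f(x) = y$ is well defined.

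Next I would establish that $f$ is a bijection of $V(X)$. For injectivity, if $f(x) = f(x') = y$ with $x \neq x'$, then $\sigma(X_x)$ and $\sigma(X_{x'})$ would both equal the nonempty block $X_y$, contradicting the injectivity of $\sigma$ on the disjoint nonempty sets $X_x$ and $X_{x'}$. For surjectivity, given any block $X_y$, pick a vertex $v$ of $X_y$; since $\sigma$ is onto, $v = \sigma(w)$ for some $w$ lying in a unique block $X_u$, whence $\sigma(X_u)$ is a block containing $v$, forcing $\sigma(X_u) = X_y$ and $f(u) = y$. Thus $\sigma$ permutes the blocks and $f$ is a bijection.

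The heart of the argument is to translate adjacency in $X$ into the coarse structure of $\Gamma$ between blocks. By the definition of the $X$-join, for distinct $x, x'$ every vertex of $X_x$ is adjacent in $\Gamma$ to every vertex of $X_{x'}$ exactly when $xx' \in E(X)$, and no vertex of $X_x$ is adjacent to any vertex of $X_{x'}$ exactly when $xx' \notin E(X)$; in the paper's notation this reads $X_x \sim X_{x'} \iff xx' \in E(X)$ together with $X_x \nsim X_{x'} \iff xx' \notin E(X)$. Since $\sigma$ is an automorphism of $\Gamma$, it preserves these two relations: $X_x \sim X_{x'}$ forces $\sigma(X_x) = X_{f(x)} \sim X_{f(x')} = \sigma(X_{x'})$, and likewise for $\nsim$. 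Because $f$ is injective, $x \neq x'$ gives $f(x) \neq f(x')$, so the above equivalences apply to the image blocks as well, yielding $xx' \in E(X) \iff f(x)f(x') \in E(X)$. Hence $f$ both preserves and reflects adjacency, so $f \in Aut(X)$.

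I do not expect a serious obstacle here. Notably, the complete-empty and reducedness hypotheses play no role in this particular lemma, since the cross-block structure of an arbitrary $X$-join already encodes $E(X)$ faithfully. The only point requiring genuine care is the bijectivity bookkeeping and the observation that the injectivity of $f$ is exactly what permits reading off an edge of $X$ from $X_{f(x)} \sim X_{f(x')}$, because the equivalence between $\sim$ and an edge of $X$ is valid only for \emph{distinct} blocks.
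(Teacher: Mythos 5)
Your proposal is correct and follows essentially the same route as the paper: translate $xx'\in E(X)$ into adjacency between the blocks $X_x$ and $X_{x'}$ in $\Gamma$, apply $\sigma$, and read the result back as $f(x)f(x')\in E(X)$. You are more careful than the paper about the well-definedness and bijectivity of $f$ (which the paper's one-line chain of equivalences silently assumes when it passes from $\sigma(t_x)\sigma(t_{x'})\in E(\Gamma)$ to $yy'\in E(X)$), but this is a matter of filling in detail rather than a different argument.
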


\begin{proof}
We assume that $t_x \in V(X_x)$, for each $x \in V(X)$. Then we have:
\begin{eqnarray*}
xx^{\prime} \in E(X) &\Leftrightarrow& t_xt_{x^{\prime}} \in E(\Gamma)\\
&\Leftrightarrow& \sigma(t_x)\sigma(t_{x^{\prime}}) \in E(\Gamma)\\
&\Leftrightarrow& yy^{\prime} \in E(X)\\
&\Leftrightarrow& f(x)f(x^{\prime}) \in E(X),
\end{eqnarray*}
where $y^{\prime} = f(x^{\prime})$, proving the lemma.
\end{proof}

\begin{thm}

\label{p1} Suppose $\Gamma$ is a reduced complete-empty
$X-$join and $X_x$ is a graph corresponding to the vertex $x$ of $X$. If for each $y \in V(X)$,
$X_x \cong X_y$, then $Aut(\Gamma) \cong
Sym(V(Y)) \wr_{V(X)} Aut(X)$, where $Y \cong X_x$.

\end{thm}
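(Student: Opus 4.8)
The plan is to realise $Aut(\Gamma)$ as the image of a natural embedding of the wreath product and then to prove that this embedding is onto. Since every fibre is isomorphic to the common graph $Y$, which is complete or empty, we have $Aut(Y) = Sym(V(Y))$ and $\Gamma \cong X \, o \, Y$; thus the statement is exactly the instance of Sabidussi's composition theorem governed by our reducedness hypothesis, and I would give a direct, self-contained argument.

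First I would write down the embedding $\Psi : Sym(V(Y)) \wr_{V(X)} Aut(X) \lo Aut(\Gamma)$. After fixing identifications $V(X_x) \cong V(Y)$, a typical element is a pair $\big((\sigma_x)_{x\in V(X)}, f\big)$ with each $\sigma_x \in Sym(V(Y))$ and $f \in Aut(X)$, and I would let it act on $V(\Gamma)$ by $(x,y) \mapsto (f(x), \sigma_{x}(y))$. That each such map is a graph automorphism is immediate from the definition of $E(\Gamma)$: an edge either arises from $xx' \in E(X)$, which is preserved because $f \in Aut(X)$, or from a within-fibre edge $yy' \in E(Y)$, which is preserved because each $\sigma_x$ permutes $V(Y)$ (and $Y$ being complete or empty, every permutation is an automorphism of $Y$). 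Checking that $\Psi$ is an injective group homomorphism is a routine bookkeeping computation with the wreath-product multiplication, so I would not dwell on it.

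The heart of the proof is surjectivity of $\Psi$, i.e.\ showing that every $\sigma \in Aut(\Gamma)$ permutes the fibres $\{X_x\}_{x\in V(X)}$, and I would establish this by characterising the fibres intrinsically. Consider the relation $u \approx v$ on $V(\Gamma)$ meaning $N_\Gamma(u)\setminus\{v\} = N_\Gamma(v)\setminus\{u\}$, equivalently that $\{u,v\}$ is externally related. Any two vertices of a common fibre $X_x$ satisfy $u \approx v$: when $Y$ is empty they are nonadjacent with identical neighbourhoods $\bigcup_{x'\in N_X(x)}X_{x'}$, and when $Y$ is complete they are adjacent with neighbourhoods agreeing off $\{u,v\}$. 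Conversely I would show that $u \in X_x$ and $v \in X_{x'}$ with $x\neq x'$ are never externally related; here both the fibre sizes and reducedness enter. Comparing $N_\Gamma(u)\setminus\{v\}$ with $N_\Gamma(v)\setminus\{u\}$ fibre by fibre, one of the two adjacency patterns of $x,x'$ is excluded purely because each fibre has at least two vertices (the extra vertices of $X_x$ then separate the two neighbourhoods), while the other pattern forces $N_X(x)\setminus\{x'\} = N_X(x')\setminus\{x\}$, i.e.\ $\{x,x'\}$ externally related in $X$; reducedness now gives a contradiction, via clause $(i)$ when $xx'\notin E(X)$ and $Y$ is empty, and via clause $(ii)$ when $xx'\in E(X)$ and $Y$ is complete. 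Hence $\approx$ is an equivalence relation whose classes are precisely the fibres.

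Once the fibres are characterised by $\approx$, the rest follows cleanly. Every $\sigma\in Aut(\Gamma)$ preserves external relatedness, since $\sigma(N_\Gamma(u)) = N_\Gamma(\sigma(u))$, and therefore permutes the $\approx$-classes; that is, for each $x$ there is $y$ with $\sigma(X_x)=X_y$. By Lemma \ref{lemma of aut.} the induced map $f(x)=y$ lies in $Aut(X)$, and the restriction of $\sigma$ to each fibre, read through $V(X_x)\cong V(Y)\cong V(X_{f(x)})$, is a permutation $\sigma_x \in Sym(V(Y))$; thus $\sigma = \Psi\big((\sigma_x)_x, f\big)$, so $\Psi$ is onto and hence an isomorphism. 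The degenerate case $|V(Y)|=1$, where $\Gamma\cong X$ and $Sym(V(Y))$ is trivial, I would dispatch separately, noting that reducedness then forbids any externally related pair in $X$. The single genuine obstacle is the converse half of the fibre characterisation: disentangling which adjacency pattern is killed by fibre size and which requires reducedness is the only delicate point, and it is exactly where the hypothesis that $\Gamma$ is reduced becomes indispensable.
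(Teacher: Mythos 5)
Your argument is correct, and while it proves the same two inclusions as the paper, it organises the crucial step in a genuinely different way. The paper shows that every $\theta\in Aut(\Gamma)$ permutes the fibres by fixing $a,b$ in a common fibre $X_x$ and running a case analysis on where $\theta(a)$ and $\theta(b)$ land, each bad configuration yielding, via a chain of adjacency equivalences transported through $\theta$ and $\theta^{-1}$, an identity $N_X(\cdot)\setminus\{\cdot\}=N_X(\cdot)\setminus\{\cdot\}$ that contradicts reducedness; it then assembles the wreath product through the homomorphism $\varphi:Aut(\Gamma)\to Aut(X)$, its kernel $\prod_{x}Sym(V(X_x))$, and the complement $B\cong Aut(X)$. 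You instead isolate an automorphism-free structural fact --- the fibres are exactly the classes of the externally related relation on $V(\Gamma)$ --- after which fibre-permutation is automatic from invariance of that relation under any automorphism, and you bypass the kernel/complement computation by exhibiting the preimage of $\sigma$ under the explicit embedding $\Psi$ directly. Your bookkeeping in the converse half of the characterisation is accurate: when $Y$ is complete and $xx'\notin E(X)$, or $Y$ is empty and $xx'\in E(X)$, a second vertex of the fibre already separates the two neighbourhoods, and only the remaining two patterns invoke reducedness (clauses $(i)$ and $(ii)$ respectively), which is precisely where the paper's sub-cases also use the hypothesis. What your route buys is a cleaner, reusable invariant (it is the relation of Lemma \ref{lemma of aut.}'s companion lemma in the introduction, and essentially the $CE_m$ condition reappearing in Theorem \ref{qqqqq}) and a shorter surjectivity argument; the only extra care required is the separate treatment of $|V(Y)|=1$, which you note.
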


\begin{proof}
If $|V(X)| = 1, 2$  or $|V(X_x)| = 1$ then the proof will be clear.
Hence we can assume that $|V(X)| \geq 3$ and $|V(X_x)| \geq 2$.
Since for each $x \in V(X)$, $X_x$'s are isomorphism and they are complete or empty,
$Aut(X_x) \cong Sym(V(X_x))$. Define:
$$U = \{\sigma \in Sym(\Gamma) \ | \ \exists f \in Aut(X) , \forall x \in V(X) , \sigma(X_x) = X_{f(x)}\}.$$
We first prove that $U = Aut(\Gamma)$. To prove
$U \subseteq Aut(\Gamma)$, we assume that
$\sigma \in U$ and $a , b \in V(\Gamma)$. If there exists $x \in V(X)$,
such that $a , b \in V(X_x)$ then there exists
$f \in Aut(X)$, such that $\sigma(a), \sigma(b) \in \sigma(V(X_x)) =  V(X_{f(x)})$. This
shows that $ab \in E(\Gamma)$ if and only if $\sigma(a)\sigma(b) \in E(\Gamma)$.
We now assume that there are $x , y \in V(X)$
such that $x \ne y$, $a \in V(X_x)$ and $b \in V(X_y)$.
Then obviously there is an automorphism $f \in Aut(X)$
such that $\sigma(a) \in \sigma(X_x) = X_{f(x)}$,
$\sigma(b) \in \sigma(X_y) = X_{f(y)}$. Therefore,
\begin{eqnarray*}
ab \in E(\Gamma) &\Leftrightarrow& xy \in E(X)\\
&\Leftrightarrow& f(x)f(y) \in E(X)\\
&\Leftrightarrow& X_{f(x)} \sim X_{f(y)}\\
&\Leftrightarrow& \sigma(a)\sigma(b) \in E(\Gamma).
\end{eqnarray*}
This proves that $U \subseteq Aut(\Gamma)$. To prove $Aut(\Gamma) \subseteq U$
we assume that $\theta \in Aut(\Gamma)$, $x \in V(X)$ and $a \in V(X_x)$.
Since $|V(X_x)| \geq 2$, there exists a vertex $b \in V(X_x)$ such that
$b \neq a$. There are two cases for $\theta(a)$ as follows:

\begin{enumerate}

\item

$\theta(a) \in V(X_x)$. We should prove that $\theta(b) \in V(X_x)$.
Let us assume that, on the contrary, there exists
$y \in V(X)$ such that $x \neq y$ and $\theta(b) \in V(X_y)$. If all $X_x$ are complete then
$$ab \in E(X_x) \Rightarrow ab \in E(\Gamma) \Rightarrow \theta(a)\theta(b) \in E(\Gamma) \Rightarrow X_x \sim X_y.$$
If $|V(X)| = 2$ then clearly $N_X(x) \setminus \{y\} = N_X(y) \setminus \{x\} = \emptyset$
contradict by reducibility of $\Gamma$ over $X$. Suppose $|V(X)| \geq 3$, $z \in
N_X(x) \setminus \{y\}$ and $c \in X_z$. Then,
\begin{eqnarray*}
ac \in E(\Gamma) &\Leftrightarrow& \theta(a)c \in E(\Gamma) \ \ \ \ \ \ \ \ \ \ \ \ \ \ \ \ (\theta(a) \in V(X_x))\\
&\Leftrightarrow& a\theta^{-1}(c) \in E(\Gamma)\\
&\Leftrightarrow& b\theta^{-1}(c) \in E(\Gamma) \ \ \ \ \ \ \ \ \ \ \ \ \ \ (b \in V(X_x))\\
&\Leftrightarrow& \theta(b)c \in E(\Gamma)\\
&\Leftrightarrow& z \in N_X(y) \setminus \{x\} \ \ \ \ \ \ \ \ \ \ \ \ \ (\theta(b) \in V(X_y)).
\end{eqnarray*}
Hence $N_X(x) \setminus \{y\} = N_X(y) \setminus \{x\}$ which is impossible.
If all $X_x$'s are empty then there is no edge in
$X_x$ connecting $a$ and $b$ and so $\theta(a)$ is not adjacent to
$\theta(b)$. So, there is no an edge connecting
$X_x$ and $X_y$ in $\Gamma$. Again, we can prove that
$N_X(x) \setminus \{y\} = N_X(y) \setminus \{x\}$, a contradiction.
So, $\theta(b) \in V(X_x)$.

\item

$\theta(a) \not\in V(X_x)$. In this case there exists $y \in V(X)$ such that
$\theta(a) \in V(X_y)$. We prove that $\theta(b) \in V(X_y)$.
By the contrary, we assume that there exists
$z \in V(X)$ such that $y \neq z$ and $\theta(b) \in V(X_z)$.
If $X_x$'s are complete then we have $ab \in E(X_x)$ which
proves that $\theta(a)\theta(b) \in E(\Gamma)$. So, $X_y \sim X_z$.
If $N_X(y) \setminus \{z\} = N_X(z) \setminus \{y\} = \emptyset$, then this is
contradict by reducibility of $\Gamma$ over $X$.
If $t \in N_X(z) \setminus \{y\}$ and $c \in X_t$ then
\begin{eqnarray*}
tz \in E(X) &\Leftrightarrow& c\theta(b) \in E(\Gamma)\\
&\Leftrightarrow& \theta^{-1}(c)b \in E(\Gamma)\\
&\Leftrightarrow& \theta^{-1}(c)a \in E(\Gamma)\\
&\Leftrightarrow& c\theta(a) \in E(\Gamma)\\
&\Leftrightarrow& ty \in E(X).
\end{eqnarray*}
This implies that $N_X(y) \setminus \{z\} = N_X(z) \setminus \{y\}$, contradict by the fact
that $\Gamma$ is a reduced complete-empty $X-$join.
Now, we assume that all $X_x$'s are empty. If  $|V(X)| = 2$ then
$\Gamma$ is isomorphic to a complete bipartite graph and so
$\theta(b) \in V(X_y)$, which is impossible.
If $|V(X)| \geq 3$ then similar to the
previous case, $N_X(y) \setminus \{z\} = N_X(z) \setminus \{y\}$ which is another
contradiction. Thus, $\theta(b) \in V(X_y)$.

\end{enumerate}

By above discussion, for each $x \in V(X)$, there exists a unique
$y_x \in V(X)$ such that $\theta(X_x) = X_{y_x}$.
Now, By defining $g(x) = y_x$ and Lemma
\ref{lemma of aut.}, one can see that $\theta(X_x) = X_{g(x)}$.
Therefore, $Aut(\Gamma) \subseteq U$ and
$$Aut(\Gamma) = \{\sigma \in Sym(\Gamma) \ | \ \exists f \in Aut(X) , \forall x \in V(X) , \sigma(X_x) = X_{f(x)}\}.$$
There exists $\varphi : Aut(\Gamma)
\rightarrow Aut(X)$ given by $\varphi(\sigma) =
f_{\sigma}$, $\sigma \in Aut(\Gamma)$, is an onto
homomorphism such that for each $x \in V(X)$, $\sigma(X_x) = X_{f_{\sigma}(x)}$.
Therefore,
\begin{eqnarray*}
Ker(\varphi) &=& \{\sigma \in Aut(\Gamma) \ | \ \varphi(\sigma) = e_{Aut(X)}\}\\
&=& \{\sigma \in Aut(\Gamma) \ | \ f_{\sigma} = e_{Aut(X)}\}\\
&=& \{\sigma \in Aut(\Gamma) \ | \ \forall x \in V(X) \ , \ \sigma(X_x) = X_x\}\\
&=& \{\sigma \in Aut(\Gamma) \ | \ \sigma = \Pi_{x \in V(X)} \sigma_x \ s.t. \ \forall x \in V(X) \ ,
\ \sigma_x \in Sym(V(X_x))\}\\
&=& \prod_{x \in V(X)} Sym(V(X_x)) \cong \prod_{x \in V(X)} Sym(Y),
\end{eqnarray*}
By previous notations, $\Gamma \cong XoY$ and $\sigma_x \in Sym(V(Y))$.
Without loss of generality, one can consider that $\sigma_x \in Sym(\{x\} \times V(Y))$. Set
$$B = \{\sigma_f \in Aut(XoY) \ | \ f \in Aut(X) \ , \ \forall (x , y) \in V(XoY) , \sigma_f((x , y)) = (f(x) , y)\}.$$
It is clear that $B \cong Aut(X)$ and
$$B \cap Ker(\varphi) = \{\sigma_f \in B \ | \ \forall x \in V(X) \ , \ f(x) = x\} = \{e_{Aut(\Gamma)}\}.$$
By definition of $U$ and for each $(x , y) \in V(XoY)$, we have:
$$\sigma((x , y)) = (f(x) , \sigma_x(y)) = \sigma_f((x , \sigma_x(y))) = \sigma_f(\sigma_x((x , y))) = (\sigma_fo\sigma_x)((x , y)).$$
Therefore $\sigma \in BKer(\varphi)$ and so $Aut(\Gamma) =
BKer(\varphi)$. Since $Ker(\varphi) \unlhd Aut(\Gamma)$,
\begin{eqnarray*}
Aut(\Gamma) &\cong& Ker(\varphi) \rtimes B\\ &\cong&
\prod_{x \in V(X)} Sym(Y) \rtimes Aut(A)\\
&\cong& Sym(Y) { \wr}_{V(X)} Aut(X).
\end{eqnarray*}
This completes the proof.
\end{proof}

It is merit to mention here that Theorem \ref{p1} can be proved by \cite[Theorem 3.1]{7}, but our proof is independent from some technical concepts like  natural isomorphism, collapsed graph, section graph, $X-$subjoin, smorphism and inverting $X-$point. On the other hand,  three parts of the proof of Theorem \ref{p1} are needed to complete the proof of main theorem.

\begin{thm}\label{p2}

Suppose $\Gamma$ is a reduced complete-empty $X-$join and $X_x$
is a graph corresponding to the vertex $x$ of $X$. If for each $x \neq y \in V(X)$,
$X_x \not\cong X_y$, then $Aut(\Gamma) \cong \prod_{x \in V(X)} Sym(V(X_x))$.

\end{thm}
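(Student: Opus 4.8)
The goal is to show that when the fibers $X_x$ are pairwise non-isomorphic, every automorphism of $\Gamma$ preserves each fiber setwise, so $Aut(\Gamma)$ reduces to the direct product of the symmetric groups on the fibers with no permutation of the blocks. The plan is to prove that every $\theta \in Aut(\Gamma)$ satisfies $\theta(X_x) = X_x$ for all $x \in V(X)$; once this is established, $\theta$ restricts to an automorphism of each $X_x$ (a complete or empty graph, whose automorphism group is the full $Sym(V(X_x))$), and conversely any such tuple of fiber-permutations is an automorphism of $\Gamma$, giving the direct product decomposition.

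First I would invoke the already-proved structural facts from Theorem~\ref{p1}. The three cases analyzed there (the argument that $\theta(a) \in V(X_x)$ forces $\theta(b) \in V(X_x)$, and the parallel argument when $\theta(a) \notin V(X_x)$) show that for every $\theta \in Aut(\Gamma)$ and every $x \in V(X)$ there is a unique $y \in V(X)$ with $\theta(X_x) = X_y$; this uses only reducibility and $|V(X_x)| \geq 2$, not the isomorphism hypothesis. Then by Lemma~\ref{lemma of aut.} the induced map $f \colon V(X) \to V(X)$, $f(x) = y$, is an automorphism of $X$. So far this is identical to the isomorphic case; the new input is the hypothesis $X_x \not\cong X_y$ for $x \neq y$.

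The key step is to combine this with the observation that $\theta$ restricted to $X_x$ is a graph isomorphism from $X_x$ onto $X_{f(x)}$, hence $X_x \cong X_{f(x)}$. Since the fibers are pairwise non-isomorphic, this forces $f(x) = x$ for every $x$, i.e.\ $f = \mathrm{id}_{V(X)}$. Therefore $\theta(X_x) = X_x$ for all $x$, and $\theta = \prod_{x \in V(X)} \sigma_x$ with each $\sigma_x \in Aut(X_x) = Sym(V(X_x))$ (the last equality because each $X_x$ is complete or empty). I would handle the degenerate cases $|V(X)| \leq 2$ or $|V(X_x)| = 1$ separately, exactly as at the start of Theorem~\ref{p1}, noting that with a single vertex in some fiber the argument forcing $\theta(b) \in V(X_x)$ needs the $|V(X_x)| \geq 2$ assumption and must be adapted; but since the fibers are pairwise non-isomorphic there can be at most one singleton fiber, so a short direct check suffices there.

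The main obstacle I anticipate is purely verifying that the fiber-preservation argument of Theorem~\ref{p1} carries over verbatim, since that argument was written assuming all fibers are mutually isomorphic; I would check that its use of ``all $X_x$ complete'' versus ``all $X_x$ empty'' is not actually needed in the non-isomorphic setting, and that the reducibility hypothesis alone suffices to rule out $\theta(b)$ landing in a different fiber. Once fiber-preservation holds, the rest is routine: the map $\theta \mapsto (\theta|_{X_x})_{x \in V(X)}$ is an isomorphism $Aut(\Gamma) \to \prod_{x \in V(X)} Sym(V(X_x))$, where surjectivity follows because any tuple of fiber-permutations respects the adjacency rule of $\Gamma$ (adjacency between distinct fibers depends only on $E(X)$, which is untouched, and within a fiber it is preserved because complete and empty graphs are fixed by every vertex-permutation).
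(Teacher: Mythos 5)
Your overall skeleton (blocks map to blocks, the induced map is an automorphism of $X$, and pairwise non-isomorphism of the fibers then forces that map to be the identity) matches the paper's, but there is a genuine gap at exactly the point you flag and then dismiss: the block-preservation argument of Theorem~\ref{p1} does \emph{not} carry over, because it is not based on reducibility alone. In that proof, after deriving $N_X(x)\setminus\{y\}=N_X(y)\setminus\{x\}$, the contradiction comes from reducibility \emph{together with} the hypothesis that the two fibers involved are simultaneously complete or simultaneously empty (the proof literally splits into ``all $X_x$ complete'' and ``all $X_x$ empty''). In the present theorem the fibers are mixed: if, say, $X_x$ is complete, $\theta(a)\in V(X_x)$ and $\theta(b)\in V(X_y)$ with $y\ne x$, the chains give $xy\in E(X)$ and $N_X(x)\setminus\{y\}=N_X(y)\setminus\{x\}$, and reducibility then only tells you that $X_y$ is not complete --- which is perfectly consistent with the configuration, not a contradiction. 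To finish you must run a second chain with another vertex $d\in V(X_y)$, $d\ne\theta(b)$, to show that $X_x$ is complete if and only if $X_y$ is complete (or the analogous parity statement), and this extra step is precisely what the paper's proof supplies in its four cases. Those cases also cover configurations your proposal never addresses: the image of $\{a,b\}\subseteq V(X_x)$ meeting two fibers both different from $X_x$ and $X_y$, the image landing inside a single fiber $X_y$ of different cardinality, and the singleton fiber $X_x\cong K_1$ --- the last of which is not the ``short direct check'' you claim but a substantial argument (one must pull back a second vertex $b$ of the target fiber to some $X_z$ and derive a completeness contradiction involving $X_z$).

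Once block-preservation is actually established, your finishing move (restriction gives $X_x\cong X_{f(x)}$, hence $f=\mathrm{id}$, hence $Aut(\Gamma)\cong\prod_{x\in V(X)}Sym(V(X_x))$) is correct and is essentially the paper's conclusion; what is missing is the case analysis that constitutes the body of the paper's proof, which you have replaced by an unjustified (and, as written, false) assertion that the argument of Theorem~\ref{p1} applies verbatim.
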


\begin{proof}
If $|V(X)| = 1$ or $2$ then the proof is trivial.
Suppose $|V(X)| > 2$ and $\sigma \in Aut(\Gamma)$ is arbitrary.
We prove that for each $x \in V(X)$, $\sigma(X_x) = X_x$.
Since $X_x$'s are non-isomorphic, for each $x \neq y \in V(X)$,
$\sigma(X_x) \neq X_y$. Suppose that there are $x \in V(X)$
and $a \in V(X_x)$ with this property that
$\sigma(a) \not \in V(X_x)$. Hence there exists $y \in V(X)$, $y \neq x$, such that $\sigma(a) \in
V(X_y)$. We consider four separate cases as follows:

\begin{enumerate}

\item

$X_x \cong K_1$. Suppose $V(X_x) = \{a\}$. Since $X_s$'s are mutually non-isomorphic,
$|V(X_y)| \geq 2$. So, there are $b \in V(X_y)$ and
$z \in V(X)$, such that $b \neq \sigma(a)$ and
$\sigma^{-1}(b) \in V(X_z)$. If $N_X(x) \neq \{z\}$, then we assume that $t \in
N_X(x) \setminus \{z\}$ and $c \in V(X_t)$. Then,
\begin{eqnarray*}
tx \in E(X) &\Leftrightarrow& ca \in E(\Gamma) \ \ \ \ \ \ \ \ \ \ \ \ \ \ \ \ \ \ (c \in V(X_t) , a \in V(X_x))\\
&\Leftrightarrow& \sigma(c)\sigma(a) \in E(\Gamma)\\
&\Leftrightarrow& \sigma(c)b \in E(\Gamma) \ \ \ \ \ \ \ \ \ \ \ \ \ \ (b , \sigma(a) \in V(X_y))\\
&\Leftrightarrow& c\sigma^{-1}(b) \in E(\Gamma)\\
&\Leftrightarrow& tz \in E(X) \ \ \ \ \ \ \ \ \ \ \ \ \ \ \ \ \ \ (\sigma^{-1}(b) \in V(X_z)).
\end{eqnarray*}
Hence $N_X(x) \setminus \{z\} = N_X(z) \setminus \{x\}$.
If $N_X(x) = \{z\}$ then $N_X(x)$ $\setminus$ $\{z\}$  $=$ $N_X(z)$ $\setminus$ $\{x\}$ = $\emptyset$.
Since $|V(X_z)| \geq 2$, we can choose $d \neq
\sigma^{-1}(b)$ in $X_z$. Then
\begin{eqnarray*}
b \sigma(a) \in E(\Gamma) &\Leftrightarrow& \sigma^{-1}(b)a \in E(\Gamma)\\
&\Leftrightarrow& zx \in E(X) \ \ \ \ \ \ \ \ \ \ \ \ \ \ (\sigma^{-1}(b) \in V(X_z) , a \in V(X_x))\\
&\Leftrightarrow& da \in E(\Gamma) \ \ \ \ \ \ \ \ \ \ \ \ \ \ \ (d \in V(X_z))\\
&\Leftrightarrow& \sigma(d)\sigma(a) \in E(\Gamma)\\
&\Leftrightarrow& \sigma(d)b \in E(\Gamma) \ \ \ \ \ \ \ \ \ \ \ \ (b , \sigma(a) \in V(X_y))\\
&\Leftrightarrow& d\sigma^{-1}(b) \in E(\Gamma).
\end{eqnarray*}
This means that $X_y$ is complete if and only if $X_z$ is complete.
If $X_y$ is a complete graph, then $\sigma^{-1}(b)a \in E(\Gamma)$ and
so $X_x \sim X_z$. But, $X_x \cong K_1$ which
contradicts by reducibility of $\Gamma$ over $X$. Thus,
$X_y$ is empty and we have $b\sigma(a) \not \in
E(\Gamma)$. Therefore, $X_z$ is empty and $X_x \not \sim X_z$, which is impossible.
This proves that $\sigma(X_x) = X_x$.

\item

$X_x \ncong K_1$ and there exists $b \in V(X_x)$
such that $a \neq b$ and $\sigma(b) \in V(X_x)$.
$X_x$ is complete if and only if $ab \in E(\Gamma)$. It means that
$\sigma(a)\sigma(b) \in E(\Gamma)$ and so $X_x \sim X_y$. At first, suppose that
$N_X(x) \neq \{y\}$. Assume that $z \in N_X(x) \setminus \{y\}$ and $c \in V(X_z)$. Then we have:
\begin{eqnarray*}
xz \in E(X) &\Leftrightarrow& \sigma(b)c \in E(\Gamma) \ \ \ \ \ \ \ \ \ \ \ \ \ \ \ \ \ \ (c \in V(X_z) , \sigma(b) \in V(X_x))\\
&\Leftrightarrow& b\sigma^{-1}(c) \in E(\Gamma)\\
&\Leftrightarrow& a\sigma^{-1}(c) \in E(\Gamma) \ \ \ \ \ \ \ \ \ \ \ \ \ \ \ \ (a , b \in V(X_x))\\
&\Leftrightarrow& \sigma(a)c \in E(\Gamma)\\
&\Leftrightarrow& yz \in E(X) \ \ \ \ \ \ \ \ \ \ \ \ \ \ \ \ \ \ \ \ \ \ (\sigma(a) \in V(X_y)).
\end{eqnarray*}
So $N_X(x) \setminus \{y\} = N_X(y) \setminus \{x\}$. If
$N_X(x) = \{y\}$ then again we have $N_X(x) \setminus \{y\} = N_X(y) \setminus \{x\} = \emptyset$.
Since $\Gamma$ is reduced,
$X_x$ is complete, if and only if $X_y$ is an empty graph. If
$|V(X_y)| = 1$ then $X_y \cong K_1$ or $X_y \cong \Phi_1$, which leaded us to
another contradiction. Therefore, there exists
$d \in V(X_y)$, $d \neq \sigma(a)$. Hence,
\begin{eqnarray*}
xy \in E(X) &\Leftrightarrow& \sigma(b)d \in E(\Gamma) \ \ \ \ \ \ \ \ \ \ \ \ \ \ \ \ (\sigma(b) \in V(X_x) , d \in V(X_y))\\
&\Leftrightarrow& b\sigma^{-1}(d) \in E(\Gamma)\\
&\Leftrightarrow& a\sigma^{-1}(d) \in E(\Gamma) \ \ \ \ \ \ \ \ \ \ \ \ \ (a , b \in V(X_x))\\
&\Leftrightarrow& \sigma(a)d \in E(\Gamma).
\end{eqnarray*}
It means that $X_x$ is complete, if and only if $X_y$ is complete which is a contradiction.
Therefore, for each $x \in V(X)$, $\sigma(X_x) = X_x$.

\item

$X_x \ncong K_1$ and there exists $b \in V(X_x)$ such that
$b \neq a$ and $\sigma(b) \not \in V(X_x) \cup V(X_y)$.
In this case, there exists $z \in V(X_x)$
such that $z \neq x , y$ and $\sigma(b) \in
V(X_z)$. $X_x$ is complete if and only if $ab$ is an edge of $\Gamma$
and so $\sigma(a)\sigma(b) \in E(\Gamma)$. It means that $X_x$
is a complete graph if and only if $yz \in E(X)$.
By assumption $N_X(z) \neq \{y\}$. Set $t \in N_X(z) \setminus \{y\}$ and
$c \in V(X_t)$. Since
\begin{eqnarray*}
tz \in E(X) &\Leftrightarrow& c\sigma(b) \in E(\Gamma) \ \ \ \ \ \ \ \ \ \ \ \ \ \ \ \ (c \in V(X_t) , \sigma(b) \in V(X_z))\\
&\Leftrightarrow& \sigma^{-1}(c)b \in E(\Gamma)\\
&\Leftrightarrow& \sigma^{-1}(c)a \in E(\Gamma) \ \ \ \ \ \ \ \ \ \ \ \ \ (a , b \in V(X_x))\\
&\Leftrightarrow& c\sigma(a) \in E(\Gamma)\\
&\Leftrightarrow& ty \in E(X) \ \ \ \ \ \ \ \ \ \ \ \ \ \ \ \ \ \ \ \ (\sigma(a) \in V(X_y)),
\end{eqnarray*}
$N_X(y) \setminus \{z\} = N_X(z) \setminus \{y\}$.
If $N_X(z) = \{y\}$ then again $N_X(y) \setminus \{z\} = N_X(z) \setminus \{y\} = \emptyset$.
Since $\Gamma$ is reduced, exactly one of $X_y$ or
$X_z$ are empty graph. On the other hand, $X_s$'s are
mutually non-isomorphic and so none of $X_y$ or
$X_z$ have one vertex. Choose $d \in V(X_y)$
and $e \in V(X_z)$. The following discussion shows that $X_y$ is complete
if and only if $X_z$ is a complete graph which is a
contradiction:
\begin{eqnarray*}
\sigma(a)d \in E(\Gamma) &\Leftrightarrow& a\sigma^{-1}(d) \in E(\Gamma)\\
&\Leftrightarrow& b\sigma^{-1}(d) \in E(\Gamma) \ \ \ \ (a , b \in V(X_x))\\
&\Leftrightarrow& \sigma(b)d \in E(\Gamma)\\
&\Leftrightarrow& e\sigma(a) \in E(\Gamma) \ \ \ \ \ \ \ (\sigma(b) , e \in V(X_z) , d , \sigma(a) \in V(X_y))\\
&\Leftrightarrow& \sigma^{-1}(e)a \in E(\Gamma)\\
&\Leftrightarrow& \sigma^{-1}(e)b \in E(\Gamma)\\
&\Leftrightarrow& e\sigma(b) \in E(\Gamma).
\end{eqnarray*}
So, $\sigma(X_x) = X_x$.

\item

$X_x \ncong K_1$ and there exists $b \in V(X_x)$ such that $a \neq b$ and $\sigma(b) \in V(X_y)$. Since
$ab \in E(\Gamma)$ if and only if $X_x$ is complete, one can easily see that
$X_x$ is a complete graph, if and only if $X_y$ is complete. If
$|V(X_x)| = |V(X_y)| = 2$ then $X_x \cong X_y$, which is impossible.
If $|V(X_y)| = 2$ then $|V(X_x)| \geq 3$ and so there exists
$c \in V(X_x)$ such that $\sigma(c) \not \in V(X_y)$. But, this is the case $(3)$ which is a contradiction.
Thus $|V(X_y)| \geq 3$. Then there exists an element
$c \in V(X_y)$ such that  $c \neq \sigma(a) , \sigma(b)$. But there
is $z \in V(X)$, such that $\sigma^{-1}(c)
\in V(X_z)$. We assume that $N_X(x) \setminus \{z\} \neq \emptyset$,  $t \in N_X(x) \setminus \{z\}$ and $d \in
V(X_t)$. Then
\begin{eqnarray*}
tx \in E(X) &\Leftrightarrow& da \in E(\Gamma) \ \ \ \ \ \ \ \ \ \ \ \ \ \ \ \ \ \ (d \in V(X_t) , a \in V(X_x))\\
&\Leftrightarrow& \sigma(d)\sigma(a) \in E(\Gamma)\\
&\Leftrightarrow& \sigma(d)c \in E(\Gamma) \ \ \ \ \ \ \ \ \ \ \ \ \ \ \ (\sigma(a) , c \in V(X_y))\\
&\Leftrightarrow& d \sigma^{-1}(c) \in E(\Gamma)\\
&\Leftrightarrow& tz \in E(X) \ \ \ \ \ \ \ \ \ \ \ \ \ \ \ \ \ \ \ (\sigma^{-1}(c) \in V(X_z)).
\end{eqnarray*}
Therefore,
$N_X(x) \setminus \{z\} = N_X(z) \setminus \{x\}$. If $X_y$ is complete then $\sigma(a)c$
is an edge of $\Gamma$, which implies that $a\sigma^{-1}(c) \in E(\Gamma)$. Ultimately, $xz \in E(X)$.
By the similar argument, we can see that if $xz \in E(X)$, then the graph $X_y$ is complete.
Now, reducibility of $\Gamma$ over $X$ concludes that $X_x$
is complete, if and only if $X_z$ is an empty graph.
Hence, $|V(X_z)| \geq 2$ and so, there exists $e \in V(X_z)$ such that $e \neq \sigma^{-1}(c)$.
Now, we have
\begin{eqnarray*}
ab \in E(\Gamma) &\Leftrightarrow& \sigma(a)\sigma(b) \in E(\Gamma)\\
&\Leftrightarrow& \sigma(a)c \in E(\Gamma) \ \ \ \ \ \ \ \ \ (\sigma(b) , c \in V(X_y))\\
&\Leftrightarrow& a\sigma^{-1}(c) \in E(\Gamma)\\
&\Leftrightarrow& be \in E(\Gamma) \ \ \ \ \ \ \ \ \ \ \ \ \ \ (a , b \in V(X_x) , \sigma^{-1}(c) , e \in V(X_z))\\
&\Leftrightarrow& \sigma(b)\sigma(e) \in E(\Gamma)\\
&\Leftrightarrow& c\sigma(e) \in E(\Gamma)\\
&\Leftrightarrow& \sigma^{-1}(c)e \in E(\Gamma).
\end{eqnarray*}
Above argument shows that $X_x$ is complete if and only if
$X_z$ is a complete graph, which is our final contradiction. So, we have again
$\sigma(X_x) = X_x$, as desired.
\end{enumerate}

Therefore, each $\sigma, \sigma \in Aut(\Gamma)$, has a decomposition of $\sigma_x$'s, where
$\sigma_x \in Aut(X_x)$. Thus, $Aut(\Gamma)$ can be written as an inner product of
$Aut(X_x)$'s. But, for every $y$, $y \in V(X)$ and $y \neq x$,
$Aut(X_x) \cap Aut(X_y) = \{e_{Aut(\Gamma)}\}$. Since $X_x$'s are complete or empty,
$Aut(X_x) \cong Sym(V(X_x))$. Therefore,
$Aut(\Gamma) \cong \prod_{x \in V(X)} Sym(V(X_x))$.
This completes the proof.
\end{proof}

\noindent{\bf Proof of the Main Theorem}.
It is clear that if
$|V(X)| = 1$ or $|V(X_x)| = 1$, $x \in V(X)$, then the proof is trivial.
So, suppose $|V(X)| , |V(X_x)| \geq 2$. Since all $X_x$'s are complete or empty,
$Aut(X_x) \cong Sym(V(X_x))$, for each $x \in V(X)$. Define
$$C = \{\sigma \in Sym(\Gamma) \ | \ \exists f \in \mathcal{A}(X) , \forall x \in V(X) , \sigma(X_x) = X_{f(x)}\}.$$
We prove that $C = Aut(\Gamma)$. Let $\sigma \in C$ and $a , b \in V(\Gamma)$.
If there exists $x \in V(X)$ such that $a , b \in V(X_x)$, then
$\sigma(a) , \sigma(b) \in V(\sigma(X_x)) = V(X_{f(x)})$, for some
$f \in Aut(X)$ that $f(T_x) = T_x$. Since $X_x \cong X_{f(x)}$,
$ab \in V(\Gamma)$ if and only if $\sigma(a)\sigma(b) \in E(\Gamma)$.
So, $C \subseteq Aut(\Gamma)$. If there exists $x , y \in V(X)$,
such that $x \neq y$, $a \in V(X_x)$ and $b \in V(X_y)$, then by a similar argument as Theorem
\ref{p1},
again one can conclude that $C \subseteq Aut(\Gamma)$.
Conversely, suppose $\theta \in Aut(\Gamma)$, $x \in V(X)$ and $a \in V(X_x)$. We show that
there exists $h \in \mathcal{A}(X)$ such that $\theta(X_x) = X_{h(x)}$.
If $|V(X_x)| = 1$ then by a similar argument as the proof of Theorem
\ref{p2} (a),
$\theta(X_x) = X_y$ in which $y \in T_x$. Now, we proceed
the proof   by assuming that $|V(X_x)| \geq 2$.
Choose $b$, $b \neq a$, be an arbitrary vertex of $X_x$.
We have three separate cases as follows:

\begin{itemize}

\item[a.]

$\theta(a) \in V(X_x)$. In this case, if $\theta(b) \in V(X_y)$, $y \in T_x$, then
a similar argument as the proof of Theorem
\ref{p1} (a),
$x = y$. Furthermore, if $\theta(b) \in V(X_z)$, for some $z \in V(X)$ such that $z \not \in T_x$,
then a similar argument as the proof of Theorem
\ref{p2} (b),
shows that it is impossible. So $\theta(b) \in V(X_x)$.

\item[b.]

$\theta(a) \in V(X_y)$ such that $y \in T_x$. In this case, the argument of the previous case applies for $x = y$.
If $\theta(b) \in V(X_z)$ such that $z \in T_x$, then a similar argument as the proof of Theorem
\ref{p1} (b),
lead us to $y = z$. Ultimately, assume that $\theta(b) \in V(X_t)$, for some $t \in V(X)$
such that $t \not \in T_x$. It is easy to see that $X_x$ is complete if and only if $ab \in E(\Gamma)$ and also,
$yz \in E(X)$ if and only if $\theta(a)\theta(b) \in E(\Gamma)$. So, $X_x$ is a complete graph if and only if
$yz \in E(X)$. Since $X_x$ and $X_y$ are isomorphic, $X_y$ is complete if and only if
$yz \in E(X)$. If $N_X(y) \setminus \{z\} = \emptyset$ then
$N_X(y) \setminus \{z\} = N_X(z) \setminus \{y\}$.
Suppose $N_X(y) \setminus \{z\} \neq \emptyset$,
$t \in N_X(y) \setminus \{z\}$ and $c \in V(X_t)$. Then,
\begin{eqnarray*}
ty \in E(X) &\Leftrightarrow& c\theta(a) \in E(\Gamma)\\
&\Leftrightarrow& \theta^{-1}(c)a \in E(\Gamma)\\
&\Leftrightarrow& \theta^{-1}(c)b \in E(\Gamma)\\
&\Leftrightarrow& c\theta(b) \in E(\Gamma)\\
&\Leftrightarrow& tz \in E(X).
\end{eqnarray*}
By above relations and the fact that $\Gamma$ is a reduced complete-empty
$X-$join, $X_x$ and $X_y$ are complete graphs if and only if $X_z$ is empty.
On the other hand, by the following relations and the fact that $X_z$ is an empty graph if and only if $d\theta(b) \not \in E(\Gamma)$,
one can deduce that  $X_y$ is complete if and only if $yz \not \in E(X)$,
which is a contradiction:
\begin{eqnarray*}
d\theta(b) \not \in E(\Gamma) &\Leftrightarrow& \theta^{-1}(d)b \not \in E(\Gamma)\\
&\Leftrightarrow& \theta^{-1}(d)a \not \in E(\Gamma)\\
&\Leftrightarrow& d\theta(a) \not \in E(\Gamma)\\
&\Leftrightarrow& yz \not \in E(X).
\end{eqnarray*}

\item[c.]

$\theta(a) \in V(X_z)$ and $z \not \in T_x$. If $\theta(b) \in E(X)$ then
a similar argument as the proof of Theorem
\ref{p2} (b),
implies that it is impossible. Moreover, if $\theta(b) \in V(X_t)$ and $t \in T_x$,
then the previous case deduces that it is not possible, too.
By the proof of Theorem
\ref{p2} (c),
if $\theta(b) \in V(X_s)$ and $s \not \in T_x$, then we get another contradiction.
Therefore, $\theta(b) \in V(X_z)$.

\end{itemize}

We observe in three cases that for each
$x \in V(X)$, there exists $y_x \in V(X)$
such that $\theta(X_x) = X_{y_x}$.
By putting $h(x) = y_x$ and Lemma \ref{lemma of aut.},
we have $\theta(X_x) = X_{h(x)}$.
This proves $C = Aut(\Gamma)$.
One can easily see that the map
$\varphi : Aut(\Gamma) \rightarrow \mathcal{A}(X)$
given by $\varphi(\sigma) = f_{\sigma}$
is an onto group homomorphism and for every $x \in V(X)$,
$\sigma(X_x) = X_{f_{\sigma}(x)}$.
Hence,
\begin{eqnarray*}
Ker(\varphi) &=& \{\sigma \in Aut(\Gamma) \ | \ \varphi(\sigma) = e_{\mathcal{A}(X)}\}\\
&=& \{\sigma \in Aut(\Gamma) \ | \ f_{\sigma} = e_{\mathcal{A}(X)}\}\\
&=& \{\sigma \in Aut(\Gamma) \ | \ \forall x \in V(X) \ , \ \sigma(X_x) = X_x\}\\
&=& \{\sigma \in Aut(\Gamma) \ | \ \sigma = \Pi_{x \in V(X)}\sigma_x \ s.t. \ \sigma_x \in Sym(V(X_x))\}\\
&=& \prod_{x \in V(X)}Sym(V(X_x)).
\end{eqnarray*}
We use the notation $(x , a)$ for vertices of $X_a$ in $\Gamma$. Without loss
of generality, we can assume that $\sigma_x \in Sym(\{x\} \times
V(X_x))$. For every $f \in \mathcal{A}(X)$, the automorphism $\sigma_f \in
Aut(\Gamma)$ is defined as $\sigma_f((x , a)) = (f(x) , a)$.
Define $D$ to be the set of all such permutations. It is clear
that $D \cong \mathcal{A}(X)$ and
$$D \cap Ker(\varphi) = \{\sigma_f \in D \ | \ \forall x \in V(X) \ , \ f(x) = x\} = \{e_{Aut(\Gamma)}\}.$$
Suppose $\sigma \in Aut(\Gamma)$. Then, there exists $f \in \mathcal{A}(X)$
such that for each $x \in V(X)$, $\sigma(X_x) = X_{f(x)}$. Now,
for every $(x , a) \in V(\Gamma)$,
$$\sigma((x , a)) = (f(x) , \sigma_x(a)) = \sigma_f((x , \sigma_x(a))) = \sigma_f(\sigma_x((x , a))) = (\sigma_fo\sigma_x)((x , a)).$$
Therefore, $\sigma = \sigma_fo\sigma_x \in DKer(\varphi)$ which
shows that $Aut(\Gamma) = DKer(\varphi)$. Since $Ker(\varphi)$
is  normal in $Aut(\Gamma)$,
$$Aut(\Gamma) \cong Ker(\varphi) \rtimes D \cong (\prod_{x \in V(X)}Sym(V(X_x))) \rtimes \mathcal{A}(X).$$
This completes our arguments. \qed

\begin{cor}\label{first cor}

Suppose $\Gamma$ is a reduced complete-empty $X-$join and $X_x$ denotes the
subgraph corresponding to the vertex $x \in V(X)$. Then $Aut(X_x)$ is
isomorphic to a normal subgroup of $Aut(\Gamma)$.

\end{cor}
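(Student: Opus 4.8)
The plan is to deduce the statement from the Main Theorem rather than to argue inside $\Gamma$ directly. By that theorem $Aut(\Gamma) \cong N \rtimes \mathcal{A}(X)$, where $N = \prod_{x \in V(X)} Sym(V(X_x))$ is exactly the kernel $Ker(\varphi)$ and is therefore normal in $Aut(\Gamma)$. Since every $X_x$ is complete or empty we have $Aut(X_x) \cong Sym(V(X_x))$, and this group embeds in $N$ as the direct factor $M_x$ consisting of those $\sigma = \prod_{y} \sigma_y$ in $Ker(\varphi)$ with $\sigma_y = e$ for all $y \neq x$. Thus the opening move is routine: realise $Aut(X_x)$ as $M_x \leq N \trianglelefteq Aut(\Gamma)$, with $M_x$ normal in $N$ because it is a direct factor of a direct product.

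Next I would compute how the complement $\mathcal{A}(X)$ conjugates these factors. Using the permutations $\sigma_f$ (for $f \in \mathcal{A}(X)$) introduced in the proof of the Main Theorem, one checks that $\sigma_f$ carries $\{x\} \times V(X_x)$ onto $\{f(x)\} \times V(X_{f(x)})$ --- this is the point at which $f(T_x) = T_x$, and hence $X_x \cong X_{f(x)}$, enters --- so that $\sigma_f M_x \sigma_f^{-1} = M_{f(x)}$. Combining this with the normality of $M_x$ in $N$ yields, for an arbitrary element $\sigma = \sigma_f \tau$ with $\tau \in N$, the conjugation rule $\sigma M_x \sigma^{-1} = M_{f(x)}$. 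In particular $M_x$ is a genuine normal subgroup of $Aut(\Gamma)$ as soon as $x$ is fixed by every $f \in \mathcal{A}(X)$; this already disposes of the situation of Theorem \ref{p2}, where the $X_x$ are pairwise non-isomorphic and $\mathcal{A}(X)$ fixes each vertex.

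The main obstacle is exactly this normality step when the $\mathcal{A}(X)$-orbit $O_x$ of $x$ is nontrivial. Then the literal factor $M_x$ is permuted to other factors and is \emph{not} normal; what is genuinely normal is the $\mathcal{A}(X)$-invariant block $\prod_{y \in O_x} M_y \cong Sym(V(X_x))^{|O_x|}$, which is too large. To finish I would have to produce, inside this block, a normal subgroup abstractly isomorphic to the single group $Sym(V(X_x))$. The cleanest sufficient condition is the existence of an $\mathcal{A}(X)$-fixed vertex $z$ with $X_z \cong X_x$, in which case $M_z$ serves; failing that, one is led to hunt for a diagonal copy of $Sym(V(X_x))$ inside the block. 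I expect this to be the delicate part of the argument, because a diagonal copy of a non-abelian $Sym(V(X_x))$ need not be normalised by the base group $N$, so the conclusion should be handled with care and may require an additional hypothesis (for example that $|V(X_x)| \leq 2$, or that the isomorphism class of $X_x$ contains a vertex fixed by $\mathcal{A}(X)$) beyond what is literally stated.
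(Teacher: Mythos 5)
Your closing doubts are not a removable blemish in an otherwise routine argument: the obstruction you identify is real, and the statement as printed is in fact false, so no choice of diagonal copy or auxiliary normal subgroup can close the gap in general. Take $\Gamma = K_{n,n}$ with $n\geq 5$, realized as the reduced complete-empty $K_2$-join of $X_1=X_2=\Phi_n$ (this is reduced, since neither part is complete). Here $T_{x}=V(K_2)$, $\mathcal{A}(X)=Aut(K_2)\cong C_2$, and $Aut(\Gamma)\cong (S_n\times S_n)\rtimes C_2=S_n\wr C_2$, while $Aut(X_1)\cong S_n$. Since $A_n$ is simple and $C_{S_n\wr C_2}(A_n\times A_n)=1$, the subgroup $A_n\times A_n$ is the unique minimal normal subgroup of $S_n\wr C_2$, so every nontrivial normal subgroup has order at least $(n!/2)^2>n!$; hence there is no normal subgroup isomorphic to $S_n$ at all. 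This is exactly your situation of a nontrivial $\mathcal{A}(X)$-orbit: the genuinely normal object is the block $\prod_{y\in O_x}M_y$, and your factor $M_x$ is only subnormal (normal in $Ker(\varphi)$, which is normal in $Aut(\Gamma)$). Your proposed rescues are the correct ones: the corollary does hold when some vertex $z$ with $X_z\cong X_x$ is fixed by every $f\in\mathcal{A}(X)$ (in particular under the hypotheses of Theorem \ref{p2}), and when $|V(X_x)|\leq 2$ the diagonal of the block works because $Sym(V(X_x))$ is then abelian.

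For comparison, the paper's own proof takes $S_x=\{\sigma\in Aut(\Gamma)\ |\ \sigma(a)=a \text{ for all } a\in V(\Gamma)\setminus V(X_x)\}$, which is precisely your $M_x$, and computes $\sigma^{-1}\theta\sigma(X_y)=X_y$ for every $y\in V(X)$. But that computation only shows that $\sigma^{-1}\theta\sigma$ stabilizes each block setwise, i.e.\ that the normal closure of $S_x$ lies in $Ker(\varphi)=\prod_y Sym(V(X_y))$; it does not return $\sigma^{-1}\theta\sigma$ to $S_x$. Indeed $\sigma^{-1}S_x\sigma=S_w$ where $\sigma(X_w)=X_x$, which is exactly the conjugation rule you derived for the $M_y$. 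So your analysis is more careful than the published one, and the ``delicate part'' you could not finish is precisely the point at which the paper's proof is wrong. What does survive, and what Corollary \ref{second cor} actually needs, is the normality of $Ker(\varphi)\cong\prod_{x\in V(X)}Sym(V(X_x))$ itself.
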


\begin{proof}
For each $x \in V(X)$, set
$$S_x = \{\sigma \in Aut(\Gamma) \ | \ \forall a \in V(\Gamma) \setminus V(X_x) , \sigma(a) = a\}.$$
It is clear that $S_x \cong Sym(V(X_x))$ and $S_x \leq Aut(\Gamma)$.
Let $\sigma$ and $\theta$ be arbitrary elements of $Aut(\Gamma)$ and $S_x$, respectively.
By the proof of Theorem \ref{main teorem}, for each $y \in V(X)$, there exists $z_y \in V(X)$, such that
$\sigma(X_y) = X_{z_y}$. So
$$\sigma^{-1}\theta\sigma(X_y) = \sigma^{-1}\theta(X_{z_y}) = \sigma^{-1}(X_{z_y}) = X_y,$$
which shows that $S_x$ is isomorphic to a
normal subgroup of $Aut(\Gamma)$.
\end{proof}

\section{Applications}
Suppose $\Gamma$  is a connected graph. If the intersection of each decreasing chain of neighborhoods of vertex
subsets are non-empty then $\Gamma$ is called $NDC$ graph.  For example, every finite graph or infinite graph
in which its vertices have finite degrees satisfies $NDC$ condition.

It is far from true that each graph is $NDC$. To see this, it is enough to check the graph $\Lambda$ with
$V(\Lambda) = \mathbb{R}$ and $E(\Lambda) = \{xy \ | \ |xy| < 1\}$.
For each $i \in \mathbb{N}$, define $A_i = \{i\}$. It is easy to see that
$N(A_i) = (-\frac{1}{i} , \frac{1}{i})$, $A_{i + 1} \subseteq A_i$ and
$\bigcap_{i \in \mathbb{N}}N(A_i) = \emptyset$. Therefore, $\Lambda$
is not an $NDC$ graph.

In this section, we apply our main theorem and its corollary to obtain the main properties of connected $NDC$ graphs.

\begin{thm}\label{qqqqq}

Let $\Gamma$ be a simple connected graph  satisfies $NDC$ condition.
Then $\Gamma$ can be written as a reduced complete-empty $X-$join.

\end{thm}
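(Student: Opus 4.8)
The plan is to produce the decomposition canonically from $\Gamma$ itself, by grouping together vertices that are indistinguishable from the outside. Define a relation on $V(\Gamma)$ by declaring $u$ and $v$ related exactly when $\{u,v\}$ is externally related, i.e. $N_\Gamma(u)\setminus\{v\}=N_\Gamma(v)\setminus\{u\}$ (Lemma 1.1). This relation is visibly reflexive and symmetric, and the first real task is transitivity. Here I would isolate the \emph{twin dichotomy}: if $u\neq v$ are externally related then either $u\sim v$ and $N_\Gamma[u]=N_\Gamma[v]$ (true twins), or $u\nsim v$ and $N_\Gamma(u)=N_\Gamma(v)$ (false twins). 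A short argument shows a vertex cannot simultaneously have a proper true twin and a proper false twin: if $N_\Gamma[u]=N_\Gamma[v]$ with $u\sim v$, and $N_\Gamma(w)=N_\Gamma(u)$ with $w\nsim u$, then $v\in N_\Gamma(u)=N_\Gamma(w)$ forces $w\sim v$, whence $w\in N_\Gamma[v]=N_\Gamma[u]$, contradicting $w\nsim u$, $w\neq u$. Consequently any chain $u\equiv v\equiv z$ is forced to be homogeneous (all true or all false), and transitivity follows. Writing $X_x$ for the induced subgraph on a class $x$, the dichotomy shows each class is a clique or an independent set, so every $X_x$ is complete or empty.

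Next I would check that these classes assemble into an $X$-join. Each class $P$ is a module: for $u,v$ in the same class and $w$ outside it, equality of the (closed or open) neighborhoods gives $w\sim u\Leftrightarrow w\sim v$, so every outside vertex is adjacent to all or none of $P$. From the module property it follows that any two distinct classes $P,Q$ satisfy either $P\sim Q$ or $P\nsim Q$: a single edge $p_0q_0$ propagates through $P$ (since $q_0$ is then adjacent to all of $P$) and then through $Q$. Setting $X=\Gamma/\!\equiv$, with $P$ adjacent to $Q$ in $X$ iff $P\sim Q$ in $\Gamma$, we obtain a simple graph with $\Gamma\cong(\biguplus_{x\in V(X)}X_x)_X$, and $X$ is connected because $\Gamma$ is (a path in $\Gamma$ projects to a walk in $X$).

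It remains to prove the decomposition is reduced, and this is the crux. Suppose $x\neq y$ are classes with $N_X(x)\setminus\{y\}=N_X(y)\setminus\{x\}$. If $xy\notin E(X)$ then $N_X(x)=N_X(y)$, so for $u\in V(X_x)$, $v\in V(X_y)$ we get $u\nsim v$ and $N_\Gamma(u)=\bigcup_{w\in N_X(x)}V(X_w)=N_\Gamma(v)$; were both $X_x$ and $X_y$ empty, $u$ and $v$ would be false twins and hence $x=y$, a contradiction, so at least one of $X_x,X_y$ is non-empty, which is condition $(i)$. The case $xy\in E(X)$ is dual, using closed neighborhoods and the true-twin relation to force condition $(ii)$. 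Thus the maximality built into the equivalence classes is precisely the reduced property.

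The main obstacle is the infinite case: every neighborhood equality above is an equality of possibly infinite sets, and the merging argument compares the external neighborhoods of two candidate classes globally. This is where I expect the $NDC$ hypothesis to be used (it is automatic for finite, and more generally locally finite, graphs): since $NDC$ guarantees that a decreasing chain of neighborhoods has non-empty intersection, I would invoke it to exclude the pathology in which two vertices appear externally related on every finite portion of $\Gamma$ yet fail to merge because their common neighborhood escapes to infinity (exactly the behaviour exhibited by the graph $\Lambda$ above). Concretely, $NDC$ is what licenses passing from agreement of neighborhoods on arbitrarily large finite sets to genuine equality, so that the $\equiv$-classes really are the maximal homogeneous modules and the quotient $X$ is well defined. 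Making this compactness step rigorous, rather than the essentially formal verifications that suffice in the finite case, is the part I expect to demand the most care.
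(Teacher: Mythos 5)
Your argument is correct in its combinatorial core, but it takes a genuinely different route from the paper's. The paper never introduces the twin relation: it defines the poset $CE_m$ of vertex subsets $U$ with $N(U)\neq\emptyset$ that induce a complete or empty subgraph and in which every pair is externally related, produces maximal elements via Zorn's lemma, and then checks that these maximal sets partition $V(\Gamma)$ and give a reduced join. The $NDC$ hypothesis is used exactly once there, to guarantee that the union $\mathfrak{U}$ of a chain $\{U_i\}$ in $CE_m$ still satisfies $N(\mathfrak{U})=\bigcap_i N(U_i)\neq\emptyset$, so that Zorn's lemma applies. You instead exhibit the parts directly as the classes of the relation ``$u=v$ or $\{u,v\}$ is externally related,'' with transitivity secured by the true-twin/false-twin dichotomy; the module property of each class, the all-or-nothing adjacency between distinct classes, and the reducedness verification (merging the two fibers into one class if both are empty and non-adjacent, or both complete and adjacent) are all correct as stated. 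Your approach avoids Zorn's lemma entirely and identifies the parts as the maximal twin classes, i.e.\ the first layer of the modular decomposition, which is arguably the more transparent description.

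The one soft spot is your closing paragraph. Every neighborhood identity in your argument is an exact set equality that follows directly from the definition of external relatedness; there is no limiting process, hence no place where a ``compactness step'' occurs and no step that actually invokes $NDC$. You should either locate a concrete step that fails without $NDC$ (I do not believe one exists in your argument) or state plainly that your proof does not use the hypothesis. Note that in the latter case you would be answering the open question of the concluding section affirmatively, and also that the obstruction the paper guards against is vacuous for your classes: for connected, non-complete $\Gamma$, any class $[a]$ is a proper module, so an outside endpoint of an edge leaving $[a]$ lies in $N([a])$, whence $N([a])\neq\emptyset$ automatically. Since a claim that a published hypothesis is redundant deserves extra scrutiny, I would recommend re-checking the degenerate cases (a class equal to all of $V(\Gamma)$, infinite classes, infinite quotient $X$) before asserting it; as far as I can see, none of them breaks your verification.
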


\begin{proof}

If $\Gamma$ is complete, then $\Gamma$ can be written as a
reduced complete $K_1-$join. So we can assume that $\Gamma$ is
not complete. Define the sets $NN$, $CE$ and $CE_m$ as follows:
\begin{eqnarray*}
NN &=& \{U \subseteq V(\Gamma) \ | \ N(U) \neq \emptyset\},\\
CE &=& \{U \in NN \ | \ \Gamma[U]: \ \text{complete or empty graph}\},\\
CE_m &=& \{U \in CE \ | \ \forall x , y \in U , N(x) \setminus \{y\} =
N(y) \setminus \{x\}\}.
\end{eqnarray*}
The sets $NN$, $CE$ and $CE_m$ are not empty, since they are containing
single points. It is clear that $CE_m$ is a partially ordered set
under set inclusion. We now prove that for each $a \in V(\Gamma)$,
$CE_m$ has a maximal element containing $a$.
Define the chain $\{U_i\}_{i \in I}$, for each non-empty
and arbitrary set $I$, such that one of the members of this chain
is $\{a\}$. We claim that  $\bigcup_{i \in I}U_i \in CE_m$.
If for all $i \in I$, $U_i = \{a\}$, then it is trivial. So assume that $\bigcup_{i \in I}U_i \neq \{a\}$.
Set $\mathfrak{U} = \bigcup_{i \in I}U_i$ and we first show that
$N(\mathfrak{U}) \neq \emptyset$. The following relations prove
that $N(\bigcup_{i \in I}U_i) = \bigcap_{i \in I}N(U_i)$:
$$x \in N(\bigcup_{i \in I}U_i) \Leftrightarrow \forall i \in I, x \in N(U_i) \Leftrightarrow x \in \bigcap_{i \in I}N(U_i).$$
$NDC$ condition implies that $\bigcap_{i \in I}N(U_i) \neq \emptyset$ and so,
$N(\mathfrak{U}) \neq \emptyset$. We now prove that
$\Gamma[\mathfrak{U}]$ is complete or empty. We first assume that
there exists $l \in I$ such that $\Gamma[U_l]$ is complete and non-isomorphic to $K_1$.
Then, for each $i \in I$, such that $U_l \subseteq U_i$, $\Gamma[U_i]$
is a complete graph and so, $\Gamma[\mathfrak{U}]$ is complete.
If for each $i \in I$, $\Gamma[U_l]$ is empty then it is clear that
$\Gamma[\mathfrak{U}]$ is also empty. Therefore, we observe that
in each case $\Gamma[\mathfrak{U}]$ is complete or empty. Choose
elements $x$ and $y$ in $\mathfrak{U}$, then there are $j , k \in
I$ such that $x \in U_j$ and $y \in U_k$. Since $U_i$'s are
chain, without loss of generality, we can assume that $U_j \subseteq
U_k$. Thus $x , y \in U_k$. Since $U_k \in CE_m$, $N(x) \setminus \{y\} =
N(y) \setminus \{x\}$ which proves that $\mathfrak{U} \in CE_m$. It can
easily see that $\mathfrak{U}$ is a maximal member of this chain
and so by Zorn's lemma, $CE_m$ has a maximal element $M$ in the
set $\{ T \in CE_m \ | \ a \in T\}$. The maximal member $M$ is
also the maximal member of $CE_m$, since otherwise, $M$ is
contained in a member of $CE_m$, contradict by maximality of $M$.

The set of all maximal elements of $CE_m$ is denoted by
$\mathcal{M}$. Since $CE_m$ is containing all singletons,
$\bigcup \mathcal{M} = \bigcup CE_m = V(\Gamma)$. To prove the
intersection of the two arbitrary different elements of $\mathcal{M}$ is empty,
by contrary, we assume that
$M_1 , M_2 \in \mathcal{M}$ such that $M_1 \cap M_2 \neq
\emptyset$. It is easy to prove $|M_1| , |M_2| \neq 1$. Now, we consider two cases as follows.

\begin{itemize}

\item[a.]

$|M_1 \cap M_2| \geq 2$. We first notice that $\Gamma[M_1]$
is complete if and only if $ \Gamma[M_1 \cap M_2]$ is complete if and only if
$\Gamma[M_2]$ is complete and in the same way $\Gamma[M_1]$ is empty
if and only if $\Gamma[M_2]$ is an empty graph.
Without loss of generality, we assume that $M_1 \setminus M_2$ is nonempty
and let $y \in M_1 \setminus M_2$.
We now assume that $\Gamma[M_1]$ and $\Gamma[M_2]$
are complete graphs and $x \in M_1 \cap M_2$.
By assumption $N_{\Gamma}(x) \setminus \{y\} = N_{\Gamma}(y) \setminus \{x\}$.
If $M_2 \setminus M_1$
is a nonempty set, then all of its elements are adjacent to $x$ in
$\Gamma$ and we have $M_2 \setminus M_1 \subseteq N_{\Gamma}(y) \setminus \{x\}$.
This implies that $\Gamma[M_1 \cup M_2]$ is complete. Which contradicts
by maximality of $M_1$ and $M_2$ in $CE_m$. Hence $M_2 \setminus M_1 = \emptyset$
and so $M_2 \subsetneq M_1$. This leaded us to another contradiction by
maximality of $M_2$. Next we assume that $\Gamma[M_1]$ and $\Gamma[M_2]$
are both empty graphs. Choose $x \in M_1 \cap M_2$.
Since $N_{\Gamma}(x) \setminus \{y\} = N_{\Gamma}(y) \setminus \{x\}$,
$N_{\Gamma}(x) = N_{\Gamma}(y)$. Moreover, if $z \in M_2 \setminus M_1$,
then $N_{\Gamma}(x) \setminus \{z\} = N_{\Gamma}(z) \setminus \{x\}$
and so $N_{\Gamma}(x) = N_{\Gamma}(z)$. Therefore,
$N_{\Gamma}(x) = N_{\Gamma}(y) = N_{\Gamma}(z)$ which implies that
$M_1 \cup M_2 \in CE_m$, a contradiction. If $M_2 \setminus M_1 = \emptyset$,
then again $M_2 \subsetneq M_1$ and this is all final contradiction.

\item[b.]

$|M_1 \cap M_2| = 1$. If both of $\Gamma[M_1]$ and $\Gamma[M_2]$
are complete graphs or empty graphs, then a similar argument as case (a),
leaded us to a contradiction. Thus, without loss of generality, we can assume that
$\Gamma[M_1]$ is a complete graph and $\Gamma[M_2]$ is empty.
Choose $x \in M_1 \cap M_2$, $y \in M_1 \setminus M_2$ and
$z \in M_2 \setminus M_1$. By definition of $CE_m$,
$N_{\Gamma}(x) \setminus \{z\} = N_{\Gamma}(z) \setminus \{x\}$.
Since $y \in N_{\Gamma}(x) \setminus \{z\}$, $y \in N_{\Gamma}(z) \setminus \{x\}$,
which shows that $z \in N_{\Gamma}(y) \setminus \{x\}$. On the other hand,
$N_{\Gamma}(x) \setminus \{y\} = N_{\Gamma}(y) \setminus \{x\}$,
implies that $z \in N_{\Gamma}(x) \setminus \{y\}$ which is impossible.

\end{itemize}

In each case we lead to a contradiction which shows that $\mathcal{M}$
is a partition of vertices of $\Gamma$. Suppose $M$ and $M^{\prime}$
are arbitrary distinct elements of $\mathcal{M}$, $x \in M$ and $x^{\prime} \in M^{\prime}$.
We will prove that $xx^{\prime} \in E(\Gamma)$ if and only if $\Gamma[M] \sim \Gamma[M^{\prime}]$.
It is enough to assume that $xx^{\prime} \in E(\Gamma)$, $y \in M$ and $y^{\prime} \in M^{\prime}$.
Since $x^{\prime} \in N_{\Gamma}(x)$, $x^{\prime} \in N_{\Gamma}(y)$ and so $x^{\prime}y \in E(\Gamma)$,
which proves that $x^{\prime}$ is adjacent to all elements of $M$.
A similar argument shows that $x$ is adjacent to all elements of $M^{\prime}$.
Since $y \in N_{\Gamma}(x^{\prime}) \setminus \{y^{\prime}\}$ and
$N_{\Gamma}(x^{\prime}) \setminus \{y^{\prime}\} = N_{\Gamma}(y^{\prime}) \setminus \{x^{\prime}\}$,
$y$ and $y^{\prime}$ are adjacent, which implies that all elements of $M$
are adjacent to all elements of $M^{\prime}$. Therefore, $\Gamma[M] \sim \Gamma[M^{\prime}]$
and $xx^{\prime} \not \in E(\Gamma)$ if and only if it is impossible
to find an edge that connects a vertex in $\Gamma[M]$ to another vertex in
$\Gamma[M^{\prime}]$. Define the graph $X$ as follows:
$$V(X) = \mathcal{M} \ \ ; \ \ E(X) = \{M_1M_2 \ | \ \Gamma[M_1] \sim \Gamma[M_2]\}.$$
Suppose $\Gamma[M]$ is the graph corresponding to the vertex $M$ in $X$.
Since any subgraph generated by elements of
$CE_m$ are complete or empty, the subgraph generated by all elements
$\mathcal{M}$ is also complete or empty.
Thus, $\Gamma[M]$ is complete or empty. So, we proved that the graph $\Gamma$
can be written as an $X$-join of complete or empty graphs.
We now prove that this join is reduced. To do this, we consider two arbitrary vertices
$M_1$ and $M_2$ such that $N_{X}(M_1) \setminus \{M_2\} = N_{X}(M_2) \setminus \{M_1\}$.

We first assume that $M_1M_2 \not \in E(X)$ and show that at least one of the graphs
$\Gamma[M_1]$ or $\Gamma[M_2]$ are not empty.
By contrary, suppose both of $\Gamma[M_1]$ and $\Gamma[M_2]$ are empty graphs.
By connectedness of $\Gamma$, there exists
$M_3 \in V(X)$ such that $M_3 \neq M_1 , M_2$ and $M_1M_3 , M_2M_3 \in E(X)$.
It is clear that $M_3 \subseteq N(M_1 \cup M_2)$ and so $N(M_1 \cup M_2) \neq \emptyset$.
Since $M_1$ and $M_2$ are empty, $M_1 \cup M_2$ is empty and hence
$M_1 \cup M_2 \in CE$. Choose arbitrary distinct elements $x_1 , x_2 \in M_1 \cup M_2$.
If $x_1 , x_2 \in M_1$ or $x_1 , x_2 \in M_2$, then one can easily see that
$N_{\Gamma}(x_1) \setminus \{x_2\} = N_{\Gamma}(x_2) \setminus \{x_1\}$,
which shows that $M_1 \cup M_2 \in CE_m$. This is a contradiction by maximality of $M_1$ and $M_2$.
Therefore, without loss of generality, we can assume that $x_1 \in M_1$ and $x_2 \in M_2$.
Obviously, for each $a \in N_{\Gamma}(x_1) \setminus \{x_2\}$, there exists
$M_a \in V(X)$ such that $a \in M_a$ and $M_a \neq M_2$. Since
$ax_1 \in E(\Gamma)$, $M_a \neq M_1$ and so $M_1M_a \in E(X)$.
Hence $M_a \in N_X(M_2) \setminus \{M_1\}$, which implies that $ax_2 \in E(\Gamma)$.
Thus $a \in N_{\Gamma}(x_2) \setminus \{x_1\}$, which shows that
$N_{\Gamma}(x_1) \setminus \{x_2\} \subseteq N_{\Gamma}(x_2) \setminus \{x_1\}$.
By a similar argument as above, we can see that
$N_{\Gamma}(x_2) \setminus \{x_1\} \subseteq N_{\Gamma}(x_1) \setminus \{x_2\}$,
which proves $N_{\Gamma}(x_1) \setminus \{x_2\} = N_{\Gamma}(x_2) \setminus \{x_1\}$.
Therefore, $M_1 \cup M_2 \in CE_m$, which is impossible. This contradiction shows that
at least one of $\Gamma[M_1]$ and $\Gamma[M_2]$ are nonempty.

We now assume that $M_1M_2 \in E(X)$ and prove at least one of the graphs
$\Gamma[M_1]$ and $\Gamma[M_2]$ are not complete. To see this, by contrary,
assume that both of them are complete graphs. If $N_X(M_1) \setminus \{M_2\}$
is empty, then $X \cong K_2$ and graphs corresponding to two vertices of $K_2$
are complete which concludes that $\Gamma$ is complete.
This lead us to a contradiction. Therefore, $N_X(M_1) \setminus \{M_2\} = N_X(M_2) \setminus \{M_1\} \neq \emptyset$,
which shows that there exists $M_4 \in V(X)$ such that
$M_4M_1 , M_4M_2 \in E(X)$. Hence $M_4 \subseteq N(M_1 \cup M_2)$ and so
$N(M_1 \cup M_2) \neq \emptyset$. Since $\Gamma[M_1]$ and $\Gamma[M_2]$
are complete and $M_1M_2 \in E(X)$, $\Gamma[M_1 \cup M_2]$ is a complete graph
and so $M_1 \cup M_2 \in CE$. Choose $y_1 , y_2 \in M_1 \cup M_2$.
If $y_1 , y_2 \in M_1$ or $y_1 , y_2 \in M_2$, then we can easily see that
$N_{\Gamma}(y_1) \setminus \{y_2\} = N_{\Gamma}(y_2) \setminus \{y_1\}$.
So, $M_1 \cup M_2 \in CE_m$ lead us to a contradiction. Then,
without loss of generality, we can assume that $y_1 \in M_1$ and $y_2 \in M_2$.
Choose $a \in N_{\Gamma}(y_1) \setminus \{y_2\}$. If $a \in M_1$ then
$ay_2 \in E(\Gamma)$ and so $a \in N_{\Gamma}(y_2) \setminus \{y_1\}$.
If $a \in M_2$, then $ay_2 \in E(\Gamma)$ which again proves that
$a \in N_{\Gamma}(y_2) \setminus \{y_1\}$. If
$a \not \in M_1 \cup M_2$, then there are $M_3 \in V(X)$,
$M_3 \neq M_1 , M_2$, such that $a \in M_3$. Since $ax \in E(\Gamma)$,
$M_3M_1 \in E(X)$ and so $M_3 \in N_X(M_1) \setminus \{M_2\}$.
This concludes that $M_3 \in N_X(M_2) \setminus \{M_1\}$.
Hence $M_3M_2 \in E(X)$ and $a \in N_{\Gamma}(y_2) \setminus \{y_1\}$.
Therefore, $N_{\Gamma}(y_1) \setminus \{y_2\} \subseteq N_{\Gamma}(y_2) \setminus \{y_1\}$
and similarly $N_{\Gamma}(y_2) \setminus \{y_1\} \subseteq N_{\Gamma}(y_1) \setminus \{y_2\}$.
This leaded us to $M_1 \cup M_2 \in CE_m$ that contradicts by maximality of
$M_1$ and $M_2$. This completes the proof.

\end{proof}

\begin{lem}\label{new}

Assume that $\Gamma$ is an $X-$join graph. Then, there exists
a partition of $V(\Gamma)$ that is denoted by $\mathcal{P}$, such that
$\Gamma / \mathcal{P} \cong X$.

\end{lem}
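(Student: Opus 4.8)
The plan is to exhibit the obvious partition coming from the fibers of the $X$-join and to verify that the quotient it induces is $X$ itself. Recall that $V(\Gamma) = \{(x,y) \mid x \in V(X),\ y \in V(X_x)\}$; for each $x \in V(X)$ set $V_x = \{x\} \times V(X_x)$, the block of vertices lying over $x$, and take $\mathcal{P} = \{V_x \mid x \in V(X)\}$. First I would check that $\mathcal{P}$ is genuinely a partition of $V(\Gamma)$: each $V_x$ is nonempty since $X_x$ has at least one vertex; two blocks $V_x$ and $V_{x'}$ with $x \ne x'$ are disjoint because their first coordinates differ; and every vertex $(x,y)$ lies in $V_x$, so the blocks cover $V(\Gamma)$.

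Next I would introduce the natural candidate isomorphism $\phi : V(X) \longrightarrow \mathcal{P}$ defined by $\phi(x) = V_x$. This map is a bijection: it is surjective by the very definition of $\mathcal{P}$, and injective because distinct vertices of $X$ yield blocks carrying distinct first coordinates.

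The heart of the argument is to show that $\phi$ preserves and reflects adjacency, that is, for $x \ne x'$ one has $xx' \in E(X)$ if and only if $V_x$ and $V_{x'}$ are adjacent in $\Gamma / \mathcal{P}$. By the definition of the quotient graph, $V_x$ and $V_{x'}$ are adjacent exactly when there exist $(x,y) \in V_x$ and $(x',y') \in V_{x'}$ with $(x,y)(x',y') \in E(\Gamma)$. The key observation is that, since $x \ne x'$, the intra-fiber clause ``$x = x'$ and $yy' \in E(X_x)$'' of the edge definition can never apply to a pair drawn one from $V_x$ and one from $V_{x'}$; hence such a cross pair is an edge of $\Gamma$ if and only if $xx' \in E(X)$, a condition independent of the choice of $y$ and $y'$. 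Consequently, if $xx' \in E(X)$ then every cross pair is an edge and the blocks are adjacent, while if $xx' \notin E(X)$ then no cross pair is an edge and the blocks are non-adjacent. This is precisely the required equivalence, so $\phi$ is a graph isomorphism and $\Gamma / \mathcal{P} \cong X$.

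I do not expect a genuine obstacle here; the only point demanding any care is that the internal edges of the fibers $X_x$ play no role in the quotient, and this is automatic because $\Gamma / \mathcal{P}$ records adjacency only between \emph{distinct} blocks. The remaining verifications are routine bookkeeping about the fiber partition.
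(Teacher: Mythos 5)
Your proposal is correct and follows essentially the same route as the paper: both take the partition into fibers $\{x\}\times V(X_x)$ and verify via the definitions of the $X$-join and of the quotient graph that the induced bijection between blocks and vertices of $X$ preserves adjacency. Your added remark that the intra-fiber clause of the edge definition never applies to a cross pair is a correct (if implicit in the paper) justification of the key equivalence.
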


\begin{proof}

Suppose $X_x$ is a graph corresponding to the vertex $x$ of $X$.
Define $P_x = V(X_x)$ and let $\mathcal{P} = \{P_x \ | \ x \in V(X)\}$.
It is clear that the well-defined function $f : \Gamma / \mathcal{P} \rightarrow X$,
by the criterion $f(P_x) = x$, is a bijection. So, it is enough to prove that
$f$ is a graph isomorphism. Following discussion shows that $P_xP_y \in E(\Gamma / \mathcal{P})$
if and only if $xy \in E(X)$, for every $x , y \in V(X)$:
\begin{eqnarray*}
P_xP_y \in E(\Gamma / \mathcal{P}) &\Leftrightarrow& \exists p_x \in P_x , p_y \in P_y \ s.t. \ p_xp_y \in E(\Gamma)\\
&\Leftrightarrow& \exists p_x \in V(X_x) , p_y \in V(X_y) \ s.t. \ p_xp_y \in E(\Gamma)\\
&\Leftrightarrow& xy \in E(X).
\end{eqnarray*}

\end{proof}

In the following theorem, it is proved that the graph $X$ in Theorem
\ref{qqqqq} is unique.

\begin{thm}\label{unique}

Let $\Gamma$ be a reduced complete-empty $X-$ and $Y-$join. Then $X \cong Y$.

\end{thm}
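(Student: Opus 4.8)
The plan is to show that both the $X$-decomposition and the $Y$-decomposition recover one and the same intrinsic partition of $V(\Gamma)$, so that $X$ and $Y$ are both isomorphic to the same quotient. Call a nonempty $S \subseteq V(\Gamma)$ a \emph{CE-module} if $\Gamma[S]$ is complete or empty and $N_\Gamma(u) \setminus S = N_\Gamma(v) \setminus S$ for all $u,v \in S$. For complete-or-empty sets this last condition is equivalent to every pair in $S$ being externally related, so CE-modules are exactly the externally related complete-or-empty sets. I would first record two routine facts: every fiber $V(X_x)$ is a CE-module (its induced graph is complete or empty by hypothesis, and all its vertices share the outside neighbourhood $\bigcup_{xx' \in E(X)} V(X_{x'})$), and if two modules overlap then their union is again a module.

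The central step is the key lemma that every fiber $V(X_x)$ is a \emph{maximal} CE-module. Suppose not: some CE-module $U$ properly contains $V(X_x)$ and meets another fiber $V(X_{x'})$ in a vertex $w$; fix $v \in V(X_x) \subseteq U$. Since $v,w \in U$ and $\Gamma[U]$ is complete or empty, $v$ and $w$ are externally related, and comparing their adjacencies to vertices of the remaining fibers forces $N_X(x) \setminus \{x'\} = N_X(x') \setminus \{x\}$ (this identity is automatic when $|V(X)| = 2$) — precisely the hypothesis of reducibility. If $\Gamma[U]$ is complete then $v \sim w$, so $xx' \in E(X)$, and $X_x$ is complete (either $|V(X_x)| = 1$, or two of its vertices lie in the complete set $U$); reducibility (ii) then makes $X_{x'}$ empty with at least two vertices. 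Choosing $w' \in V(X_{x'}) \setminus \{w\}$ gives $w \not\sim w'$, whence $w' \notin U$; but then the module $U$ forces $w'$ adjacent to none of $U$, contradicting $v \sim w'$ (valid because $xx' \in E(X)$). The case $\Gamma[U]$ empty is symmetric, using reducibility (i). The obstacle I expect here is exactly the bookkeeping of this case analysis; it is the same adjacency-chasing mechanism that drives Theorems \ref{p1} and \ref{qqqqq}, so I would import those patterns rather than reinvent them.

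Next I would prove that two maximal CE-modules $M_1, M_2$ sharing a vertex must coincide. If they have the same type and both have size at least $2$, then $M_1 \cup M_2$ is a module of the same type, contradicting maximality; if one has size $1$ it sits inside the other, so maximality gives equality. The remaining possibility — one complete, one empty, both of size at least $2$, sharing a vertex $v$ — is impossible: taking $a \in M_1 \setminus \{v\}$ (so $a \sim v$) and $b \in M_2 \setminus \{v\}$ (so $b \not\sim v$), the module condition on $M_2$ gives $a \sim b$ and then the module condition on $M_1$ gives $b \sim v$, contradicting $b \not\sim v$. Hence the maximal CE-modules are pairwise disjoint.

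Finally I would assemble the conclusion. By the key lemma the fiber partition $\mathcal{P} = \{V(X_x)\}_x$ consists of maximal CE-modules and covers $V(\Gamma)$, and likewise $\mathcal{Q} = \{V(Y_y)\}_y$. For any $v \in V(\Gamma)$, the block of $\mathcal{P}$ and the block of $\mathcal{Q}$ through $v$ are two maximal CE-modules with a common point, hence equal by the disjointness step; therefore $\mathcal{P} = \mathcal{Q}$. Applying Lemma \ref{new} to each decomposition then yields $X \cong \Gamma/\mathcal{P} = \Gamma/\mathcal{Q} \cong Y$. The degenerate cases $|V(X)| = 1$ or $|V(Y)| = 1$ fit the same scheme, since then reducibility forces $\Gamma$ itself to be complete or empty and the unique maximal CE-module is all of $V(\Gamma)$.
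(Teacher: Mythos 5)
Your proposal is correct, but it takes a genuinely different route from the paper. The paper proves uniqueness \emph{directly}: it invokes Lemma \ref{new} to get the two fiber partitions $\mathcal{P}_1$, $\mathcal{P}_2$, supposes they differ, and then chases adjacencies around a part $P_1 \in \mathcal{P}_1 \setminus \mathcal{P}_2$ to manufacture a pair $y_2, y_2'$ of $Y$-vertices with $N_Y(y_2)\setminus\{y_2'\} = N_Y(y_2')\setminus\{y_2\}$ that violates reducibility of the $Y$-join. You instead prove an \emph{intrinsic characterization}: in any reduced complete-empty $X$-join the fibers are exactly the maximal externally-related complete-or-empty sets (the maximal elements of what the proof of Theorem \ref{qqqqq} calls $CE_m$), and two such maximal sets meeting in a point coincide; uniqueness of the partition, and hence of $X$ via Lemma \ref{new}, drops out immediately. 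I checked your key lemma carefully and it is sound: the pairwise external-relatedness of $v \in V(X_x)$ and $w \in U \cap V(X_{x'})$ does force $N_X(x)\setminus\{x'\} = N_X(x')\setminus\{x\}$, and reducibility (ii) in the complete case (resp.\ (i) in the empty case) produces the vertex $w'$ of $V(X_{x'})$ outside $U$ whose adjacency to $v$ but not to $w$ contradicts the module condition. Your approach buys several things the paper's does not: it makes transparent that the decomposition constructed in Theorem \ref{qqqqq} is the \emph{only} reduced one, it shows the partition itself (not merely $X$ up to isomorphism) is unique, and it needs neither the NDC hypothesis nor Zorn's lemma, since the candidate maximal modules are given rather than constructed. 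One small caution: your ``routine fact'' that the union of two overlapping CE-modules is again a CE-module is false as stated when one is complete and the other empty (the union need not be complete or empty); this does no harm because your third paragraph correctly rules out that mixed configuration before ever using the union, but you should either restrict the stated fact to modules of the same type or drop it in favour of the case analysis you actually carry out.
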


\begin{proof}

By Lemma \ref{new}, $V(\Gamma)$ have two partitions ${\mathcal P}_1$
and ${\mathcal P}_2$ such that $\Gamma / {\mathcal P}_1 \cong X$
and $\Gamma / {\mathcal P}_2 \cong Y$. If ${\mathcal P}_1 = {\mathcal P}_2$
then clearly, $X \cong Y$. Suppose ${\mathcal P}_1 \neq {\mathcal P}_2$.
Without loss of generality, assume that there exists
$P_1 \in {\mathcal P}_1 \setminus {\mathcal P}_2$.
Then there exist $P_2 , P_2^{\prime} \in {\mathcal P}_2$,
such that $P_1 \cap P_2 , P_1 \cap P_2^{\prime} \neq \emptyset$.
Let $y_2 , y_2^{\prime} \in V(Y)$ which are corresponding to the graphs
$\Gamma[P_2]$ and $\Gamma[P_2^{\prime}]$, respectively.
We consider $y \in N_Y(y_2) \setminus \{y_2^{\prime}\}$
and denote the graph corresponding to $y$ by $Y_y$.
So, there exists $P_2^{\prime \prime} \in {\mathcal P}_2$
such that $Y_y = \Gamma[P_2^{\prime \prime}]$.
We will consider two separate cases as follows:

\begin{enumerate}

\item[a.]

\textit{$P_2^{\prime \prime} \setminus P_1 \neq \emptyset$}.
Suppose $a \in P_2^{\prime \prime} \setminus P_1$.
Hence there exists a part $P_1^{\prime} \in {\mathcal P}_1$ such that
$a \in P_1^{\prime}$. Since $a$
is adjacent to all vertices of $\Gamma[P_2]$ and $P_1 \cap P_2 \neq \emptyset$,
all elements of $P_1$ are adjacent to all elements of $P_1^{\prime}$.
Thus, $a$ is adjacent to all elements of $P_1 \cap P_2^{\prime}$
and so $\Gamma[P_2^{\prime \prime}] \sim \Gamma[P_2^{\prime}]$.
Hence, $y \in N_Y(y_2^{\prime}) \setminus \{y_2\}$ and
$N_Y(y_2) \setminus \{y_2^{\prime}\} \subseteq N_Y(y_2^{\prime}) \setminus \{y_2\}$, as desired.

\item[b.]

\textit{$P_2^{\prime \prime} \subseteq P_1$}.
Since $\Gamma[P_2] \sim \Gamma[P_2^{\prime \prime}]$,
all elements of $P_2^{\prime \prime}$ are adjacent to all elements of $P_1 \cap P_2^{\prime}$.
Hence, $E(\Gamma[P_1]) \neq \emptyset$ and $\Gamma[P_1]$ is complete.
This concludes that all elements of
$P_2^{\prime \prime}$ are adjacent to all elements of $P_1 \cap P_2^{\prime}$
and so, are adjacent to vertices of $\Gamma[P_2^{\prime}]$.
Now, one can prove that
$y \in N_Y(y_2^{\prime}) \setminus \{y_2\}$.
Therefore, $N_Y(y_2) \setminus \{y_2^{\prime}\} \subseteq N_Y(y_2^{\prime}) \setminus \{y_2\}$.

\end{enumerate}

Now, using a similar argument,
shows that
$N_Y(y_2^{\prime}) \setminus \{y_2\} \subseteq N_Y(y_2) \setminus \{y_2^{\prime}\}$ and so
$N_Y(y_2^{\prime}) \setminus \{y_2\} = N_Y(y_2) \setminus \{y_2^{\prime}\}$.

The inclusion relation between $P_2$ and $P_1$ and between
$P_2^{\prime}$ and $P_1$, is discussed in the following two cases.

\begin{enumerate}

\item[1.]

\textit{$P_2, P_2^{\prime} \subseteq P_1$}.
If $\Gamma[P_1]$ is complete,
then $\Gamma[P_2]$ and $\Gamma[P_2^{\prime}]$ are complete graphs
in which $\Gamma[P_2] \sim \Gamma[P_2^{\prime}]$.
So, $y_2y_2^{\prime} \in E(Y)$ which contradicts by reducibility of $\Gamma$ over $Y$.
If $\Gamma[P_1]$ is an empty graph,
then $\Gamma[P_2]$ and $\Gamma[P_2^{\prime}]$ are empty
and so $\Gamma[P_2] \not \sim \Gamma[P_2^{\prime}]$.
Thus, $y_2y_2^{\prime} \not \in E(Y)$, which is again a
contradiction due to reducibility of $\Gamma$ over $Y$.

\item[2.]

\textit{$P_2 \cup P_2^{\prime} \not \subseteq P_1$}.
Without loss of generality, assume that $P_2 \not \subseteq P_1$
and $P_1 \cap P_2 \neq \emptyset$.
Let $a \in P_1 \cap P_2$, $b \in P_1 \cap P_2^{\prime}$ and $c \in P_2 \setminus P_1$.
There is $P_1^{\prime} \in {\mathcal P}_1$ such that $c \in P_1^{\prime}$.
It is easy to see that $\Gamma[P_1]$ is complete, if and only if $ab \in E(\Gamma)$.
A similar argument shows that $\Gamma[P_2]$ is a complete graph, if and only if $ac \in E(\Gamma)$.
Hence,
\begin{eqnarray*}
ab \in E(\Gamma) &\Leftrightarrow& \Gamma[P_2] \sim \Gamma[P_2^{\prime}]\\
&\Leftrightarrow& bc \in E(\Gamma)\\
&\Leftrightarrow& \Gamma[P_1] \sim \Gamma[P_1^{\prime}]\\
&\Leftrightarrow& ac \in E(\Gamma),
\end{eqnarray*}
which implies that $\Gamma[P_1]$ is complete, if and only if $\Gamma[P_2]$ is a
complete graph. It is easy to prove that $\Gamma[P_2] \sim \Gamma[P_2^{\prime}]$,
if and only if $y_2y_2^{\prime} \in E(Y)$. If $P_2^{\prime} \subseteq P_1$, then
$\Gamma[P_1]$ is a complete graph, if and only if $\Gamma[P_2^{\prime}]$
is complete, which contradicts by reducibility of $\Gamma$ over $Y$.
Also, if $P_2^{\prime} \not \subseteq P_1$ and $P_1 \cap P_2^{\prime} \neq \emptyset$,
then a similar argument as $P_2$, shows that $\Gamma[P_1]$ is complete, if and only if
$\Gamma[P_2^{\prime}]$ is a complete graph, which is an another contradiction.

\end{enumerate}

Now, if there exists an element in ${\mathcal P}_2$ contains $P_1$,
then the changing role of $X$ and $Y$, lead us to a contradiction.
This completes proof.

\end{proof}

By the last theorem, if $\Gamma$ can be written as a reduced complete-empty $X-$join, we call
$X$ to be a characteristic graph of $\Gamma$ and denote it by $\chi(\Gamma) = X$.
The following corollary is an immediate consequence of the
previous theorem.

\begin{cor}\label{second cor}
Suppose $\Gamma$ is a connected, $NDC$ graph and
$Aut(\Gamma)$ is simple. Then $\Gamma \cong \chi(\Gamma)$.
\end{cor}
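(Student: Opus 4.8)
The plan is to combine the three structural results already established with the elementary fact that a simple group has no proper nontrivial normal subgroup. First I would record that, since $\Gamma$ is connected and satisfies the $NDC$ condition, Theorem \ref{qqqqq} lets me write $\Gamma$ as a reduced complete-empty $X$-join of graphs $X_x$, $x \in V(X)$, and Theorem \ref{unique} guarantees that this $X$ is unique, that is, $X \cong \chi(\Gamma)$. Consequently the Main Theorem \ref{main teorem} applies and gives
$$Aut(\Gamma) \cong \left(\prod_{x \in V(X)}Sym(V(X_x))\right) \rtimes \mathcal{A}(X).$$
Write $N = \prod_{x \in V(X)}Sym(V(X_x))$ for the left-hand factor; it is the kernel of the projection onto $\mathcal{A}(X)$ and hence a normal subgroup of $Aut(\Gamma)$.

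Next I would exploit simplicity. Since $Aut(\Gamma)$ is simple, the normal subgroup $N$ is either trivial or all of $Aut(\Gamma)$, and the task is to show that $N$ must be trivial. Here I would invoke Corollary \ref{first cor}: for every $x \in V(X)$ the group $S_x \cong Sym(V(X_x))$ embeds as a normal subgroup of $Aut(\Gamma)$, so by simplicity each $S_x$ is either trivial or equals $Aut(\Gamma)$. Because distinct $S_x$ act on pairwise disjoint vertex sets, any two of them intersect trivially; hence at most one of them can be nontrivial, for if $S_x$ and $S_y$ were both equal to $Aut(\Gamma)$ we would get $Aut(\Gamma) = S_x \cap S_y = \{e\}$, which is absurd. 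Thus if some block had more than one vertex there would be a single index $x_0$ with $S_{x_0} = Aut(\Gamma)$, forcing $Aut(\Gamma) \cong Sym(V(X_{x_0}))$.

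The main obstacle is precisely to rule out this last possibility, and it reduces to the purely group-theoretic fact that a symmetric group $Sym(V)$ is never a nonabelian simple group: for $|V| \geq 3$ it contains the proper nontrivial normal alternating subgroup $\mathrm{Alt}(V)$, for infinite $V$ it contains the normal subgroup of finitely supported permutations, and $|V| \leq 1$ makes it trivial. The only remaining case $|V| = 2$ yields $Sym(V) \cong \mathbb{Z}_2$, which I would exclude by reading the hypothesis as the assertion that $Aut(\Gamma)$ is a nonabelian simple group (equivalently, by discarding the degenerate instance $\Gamma \cong K_2$, where $Aut(\Gamma) \cong \mathbb{Z}_2$ and $\chi(\Gamma) \cong K_1$). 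With that case removed, $S_{x_0} = Aut(\Gamma)$ is impossible, so every $S_x$ is trivial and therefore $|V(X_x)| = 1$ for all $x \in V(X)$. When all blocks are singletons the $X$-join reduces to $X$ itself, whence $\Gamma \cong X \cong \chi(\Gamma)$, as required.
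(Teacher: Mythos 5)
Your proof follows essentially the same route as the paper: write $\Gamma$ as a reduced complete-empty $X$-join via Theorems \ref{qqqqq} and \ref{unique}, embed each $Sym(V(X_x))$ as a normal subgroup of $Aut(\Gamma)$ via Corollary \ref{first cor}, and let simplicity force $|V(X_x)|=1$, whence $\Gamma\cong X=\chi(\Gamma)$. You are in fact more careful than the paper, which jumps directly to $|V(X_x)|=1$ without excluding the possibility $Sym(V(X_x))=Aut(\Gamma)$; your explicit handling of that case, and in particular of $Sym(2)\cong\mathbb{Z}_2$ (the instance $\Gamma\cong K_2$, for which the corollary as literally stated fails since $\chi(K_2)\cong K_1$), repairs a genuine gap in the published argument.
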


\begin{proof}

Theorems \ref{qqqqq} and
\ref{unique} conclude that $\Gamma$ can be written as a reduced complete-empty $X-$join
of some graphs for a unique graph $X$, up to isomorphism. If we denote $X_x$ as a graph
corresponding to the vertex $x \in V(X)$, then
By Corollary \ref{first cor},
$Aut(X_x)$ is isomorphic to a normal
subgroup of $Aut(\Gamma)$. Since $Aut(X_x) \cong Sym(V(X_x))$ and
$Aut(\Gamma)$ is simple, $|V(X_x)| = 1$ which shows that $\Gamma = X = \chi(\Gamma)$.
\end{proof}

%%%%%%%%%%%%%%%%%%%%%%%%%%%%%%%%%%%%%%%%%%%%%%%%%%%%%%%%%%%%%%%%%%%%%%%%%%%%%%%%%%%%%%%%%%%%%%%%%%%%%%%%%%%%%%%%%%%%%%%%%
%%%%%%%%%%%%%%%%%%%%%%%%%%%%%%%%%%%%%%%%%%%%%%%%%%%%%%%%%%%%%%%%%%%%%%%%%%%%%%%%%%%%%%%%%%%%%%%%%%%%%%%%%%%%%%%%%%%%%%%%%

\section{Concluding Remark}

In this paper, reduced complete-empty $X-$join of graphs together with their automorphism group are studied. It is proved that the automorphism group of such graphs is a semi-direct product of two groups. Our calculations with graphs of small orders suggest the following open question:

\begin{qu}
Is it true that every simple connected graph can be written as a  reduced complete-empty $X-$join of some graphs?
\end{qu}

It is well-known that most of graphs have trivial automorphism group. If the above question has an affirmative answer then we can immediately prove that the most of graphs have trivial $X_x$, $x \in V(X)$.

\vskip 3mm

\noindent{\bf Acknowledgement.} The research of the authors
are partially supported by the University of Kashan under grant no
364988/67.

%%%%%%%%%%%%%%%%%%%%%%%%%%%%%%%%%%%%%%%%%%%%%%%%%%%%%%%%%%%%%%%%%%%%%%%%%%%%%%%%%%%%%%%%%%%%%%%%%%%%%%%%%%%%%%%%%%%%%%%%%
%%%%%%%%%%%%%%%%%%%%%%%%%%%%%%%%%%%%%%%%%%%%%%%%%%%%%%%%%%%%%%%%%%%%%%%%%%%%%%%%%%%%%%%%%%%%%%%%%%%%%%%%%%%%%%%%%%%%%%%%%

\end{document}